\newcommand{\trace}{{\rm trace}}
\newtheorem{theorem}{Theorem}
\newtheorem{corollary}{Corollary}
\newtheorem*{definition}{Definition}
\newtheorem{prop}{Proposition}
\newtheorem{lemma}{Lemma}
\newtheorem{rmk}{Remark}
\theoremstyle{definition}
\def\pmod#1{\allowbreak\mkern2mu({\operator@font mod}\,\,#1)}
\begin{document}

\title{On sign changes of primitive Fourier coefficients of Siegel cusp forms}

\author{Karam Deo Shankhadhar, Prashant Tiwari}

\address[Karam Deo Shankhadhar]{Department of Mathematics, Indian Institute of Science Education and Research Bhopal,
Bhopal Bypass Road,  Bhauri,
Bhopal 462 066,
Madhya Pradesh, India}
\email{karamdeo@iiserb.ac.in}

\address[Prashant Tiwari]{Department of Mathematics, Indian Institute of Science Education and Research Bhopal,
Bhopal Bypass Road,  Bhauri,
Bhopal 462 066,
Madhya Pradesh, India}
\email{ptiwari@iiserb.ac.in}

\subjclass[2020]{11F46, 11F50, 11F30, 11F37}

%\date{February 28, 2021}

\keywords{Siegel modular forms, Jacobi forms, primitive Fourier coefficients, sign changes}

\begin{abstract}
In this article, we establish quantitative results for sign changes in certain subsequences of primitive Fourier coefficients of a non-zero Siegel cusp form of arbitrary degree over congruence subgroups. As a corollary of our result for degree two Siegel cusp forms, we get sign changes of its diagonal Fourier coefficients. In the course of our proofs, we prove the non-vanishing of certain type of Fourier-Jacobi coefficients of a Siegel cusp form and all theta components of certain Jacobi cusp forms of arbitrary degree over congruence subgroups, which are also of independent interest.      
\end{abstract}

\maketitle

\section{Introduction}

Let $k, N$ be positive integers and $\chi$ be a Dirichlet character modulo $N$. Let $g \geq 2$ and 
$S_k^g(N, \chi)$ denote the space of Siegel cusp forms of weight $k$, degree $g$ and Dirichlet character 
$\chi$ over the congruence subgroup $\Gamma_0^g(N) (\subseteq Sp_g(\mathbb{Z}))$ (for the precise definition and other properties of Siegel cusp forms, we refer to \S 2 below). Any $F\in S_k^g(N,\chi)$ has a unique Fourier series expansion 
\begin{equation*}
F(Z)=\sum_{T\in\mathcal{J}_g} a_F(T) e^{2 \pi i \ {\rm trace} (TZ)}, 
\end{equation*}
where $Z$ is in the Siegel upper-half space of degree $g$ and $\mathcal{J}_g$ denotes the set of half-integral, symmetric, positive-definite $g \times g$ matrices. The Fourier coefficients $a_F(T)$ of $F$ are quite mysterious 
objects and have been investigated by several authors over the past years from various aspects. 

An important direction would be to look for small and important subsets of $\mathcal{J}_g$ such that any Siegel cusp form of degree $g$ must have non-zero Fourier coefficients supported on it. In \cite[p. 387]{DZ}, Zagier 
showed that any non-zero Siegel cusp form of degree $2$ over the group $Sp_2(\mathbb{Z})$ is uniquely determined by the Fourier coefficients supported on primitive matrices (for a precise definition of primitive matrices, see \S 2). Yamana \cite{SY} generalized this result for any degree $g \geq 2$ and any congruence subgroup $\Gamma_0^g(N)$. In \cite{saha}, Saha considered a proper subset of primitive matrices containing those matrices $T \in \mathcal{J}_2$ such that $4 \det T$ is odd, square-free and proves that any non-zero
Siegel cusp form of degree $2$ over $Sp_2(\mathbb{Z})$ has non-zero Fourier coefficients supported on it.
B\"ocherer and Das \cite{BD} recently established a  quantitative result extending Saha's result to vector valued  
Siegel modular forms of arbitrary degree over the group $Sp_g(\mathbb{Z})$. In \cite{SS}, Saha's result was extended to the congruence subgroups $\Gamma_0^2(N), N$ square-free.      
In a recent work \cite{M}, Martin presented a set of diagonal matrices such that any Siegel 
cusp form in the space $S_k^2(N, \chi)$ ($k$ even, $N$ odd, square-free and $\chi$ primitive) has Fourier 
coefficients supported on it. In this paper, we are interested in understanding the sign changes of the 
primitive Fourier coefficients of Siegel cusp forms.    

In \cite{SJ}, it is proved that if the Fourier coefficients $a_F(T)$ of a non-zero Siegel cusp form $F$ of even integral weight over the symplectic group $Sp_{g}(\mathbb{Z}) (g \geq 2)$ are real then there are infinitely 
many $T \in \mathcal{J}_g$ (modulo the usual action of $GL_g(\mathbb{Z})$) such that $a_F(T) > 0$ and
similarly infinitely many $T$ such that $a_F(T) < 0$. 
%The proof uses the analytic properties of the Koecher-Maass series and its Rankin-Selberg convolution.
Moreover, Choie, Gun and Kohnen \cite{CGK} gave an explicit upper bound for the first sign change of the Fourier coefficients of these Siegel cusp forms. The bound obtained in this paper was later improved by 
He and Zhao \cite{HZ} by strengthening a result used in \cite {CGK} from elliptic modular forms of integral weight.
In \cite{GS17}, Gun and Sengupta obtained certain quantitative results for sign changes of Fourier coefficients 
of Siegel cusp forms in the space $S_k^2(N)$ with $k$ even and $N$ square-free. In \cite{CGK}, Choie, Gun and Kohnen ask more generally about the distribution of signs of the primitive Fourier coefficients. In this paper, we investigate sign changes of Fourier coefficients of Siegel cusp forms of arbitrary degree $g (\geq 2)$ and for the congruence subgroup $\Gamma_0^g(N), N$ odd, supported on certain sparse subsets of $\mathcal{J}_g$ which are contained in the set of primitive $GL_g(\mathbb{Z})$-inequivalent matrices. 

\subsection{Statement of the main results}
For any positive integer $N$ and any Dirichlet character $\chi$ modulo $N$ with conductor $m_\chi$, 
we denote by 
$S_k^{g, {\rm old}} (N,\chi)$, the linear subspace of $S_k^g(N, \chi)$ spanned by the set
$$
\{F(dZ) | F\in S_k^g(M, \chi), d \in \mathbb{Z}_{>0}, m_\chi | M, dM | N, M \neq N\}.
$$ 
We denote by $S_k^{g, {\rm new}}(N,\chi)$, the orthogonal complement (with respect to the Petersson inner product) of $S_k^{g, {\rm old}} (N,\chi)$ in $S_k^g(N, \chi)$. Throughout the paper, by a {\textit{newform}} in 
$S_k^g(N, \chi)$ we mean an element of $S_k^{g, {\rm new}}(N,\chi)$. Note that, it is sufficient to consider the sign changes for the newforms as the problem in the complement space $S_k^{g, {\rm old}} (N, \chi)$ reduces 
to newforms of lower levels.

In order to state our results, let us introduce some notations which will be used throughout the article. 
Let $n \geq 1$. For any $n \times n$ symmetric, half-integral, positive definite matrix $\mathcal{M}$ and 
any $\mu \in \mathbb{Z}^{n,1}$, consider the following subset $S_{\mathcal{M}, \mu}$ of $\mathcal{J}_{n+1}$.  
\begin{equation*}\label{eq:Mmu}
S_{\mathcal{M}, \mu} = \left\{T=\begin{pmatrix} \frac{m+\mathcal{M}^*[\mu]}{4 \det \mathcal{M}} & \mu^t/2 \\ \mu/2 & \mathcal{M} \end{pmatrix}:
m \in \mathbb{Z}\ {\rm such\ that}\ T \in \mathcal{J}_{n+1}\right\},
\end{equation*}
where $\mathcal{M}^*$ denotes the cofactor matrix of $\mathcal{M}$, $\mu^t$ denotes the transpose of $\mu$  and $\mathcal{M}^*[\mu]=\mu^t \mathcal{M}^* \mu$. If $n=1$ then we define $\mathcal{M}^*=1$. Note that 
$4 \det T = m$. 

\begin{theorem}\label{thm:1}
Let $g, k, N$ be positive integers. Assume that $g \geq 2$ and $N$ is an odd positive integer. 
Let $\chi$ be a Dirichlet character modulo $N$ such that $\chi(-1)=(-1)^k$. 
Let $F \in S_k^{g+1, {\rm new}}(N,\chi)$ be a non-zero 
Siegel cusp form with 
real Fourier coefficients $a_F(T)$. Then there are infinitely many
half-integral, positive definite, primitive $g \times g$ matrices $\mathcal{M}$, and for any such fixed $\mathcal{M}$ there is  
a $\mu \in {\mathbb{Z}^{g, 1}}$ such that
the sequence of Fourier coefficients $\{a_F(T): T \in S_{\mathcal{M},\mu}\}$
has at least one sign change in the interval $4 \det T \in (x, x+x^{3/5}]$ for $x \gg 1$. 
\end{theorem}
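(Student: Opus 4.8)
The plan is to convert the $(g+1)$-dimensional statement into a sign-change statement for the Fourier coefficients of a single cusp form in one variable. Writing $Z=\left(\begin{smallmatrix}\tau & z^{t}\\ z & Z'\end{smallmatrix}\right)$ with $\tau$ in the upper half plane, $z\in\mathbb{C}^{g,1}$ and $Z'$ in the degree-$g$ Siegel upper half space, I first expand $F$ in its Fourier--Jacobi series $F(Z)=\sum_{\mathcal{M}}\phi_{\mathcal{M}}(\tau,z)\,e^{2\pi i\,\mathrm{trace}(\mathcal{M}Z')}$, each $\phi_{\mathcal{M}}$ being a Jacobi cusp form of weight $k$ and matrix index $\mathcal{M}$. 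For a fixed positive definite primitive $\mathcal{M}$, the theta decomposition $\phi_{\mathcal{M}}=\sum_{\mu}h_{\mu}\,\theta_{\mathcal{M},\mu}$ exhibits $\phi_{\mathcal{M}}$ through its theta components $h_{\mu}$, which are cusp forms of weight $\kappa:=k-g/2$ on a congruence subgroup, with real Fourier coefficients. Comparing Fourier expansions and using the Schur-complement identity $4\det T=4n\det\mathcal{M}-\mathcal{M}^{*}[\mu]$ together with $4\det T=m$, the coefficient of $h_{\mu}$ attached to $m$ is exactly $a_{F}(T)$ for the matrix $T\in S_{\mathcal{M},\mu}$ with $4\det T=m$. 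Hence, after the affine substitution $n=(m+\mathcal{M}^{*}[\mu])/4\det\mathcal{M}$, the sequence $\{a_{F}(T):T\in S_{\mathcal{M},\mu}\}$ is precisely the sequence of Fourier coefficients of $h_{\mu}$ along a fixed arithmetic progression; and because $\mathcal{M}$ is primitive and occupies the lower right block of $T$, every such $T$ is automatically primitive, which is what places $S_{\mathcal{M},\mu}$ inside the primitive, pairwise $GL_{g+1}(\mathbb{Z})$-inequivalent locus (inequivalence coming from the distinct values of $\det T$).

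Next I would secure the index data. First I need infinitely many primitive $\mathcal{M}$ with $\phi_{\mathcal{M}}\neq 0$: starting from $F\neq 0$, Yamana's theorem \cite{SY} supplies a non-zero primitive coefficient $a_{F}(T_{0})$, and one wants to move $T_{0}$ by a $GL_{g+1}(\mathbb{Z})$-substitution so that its lower right $g\times g$ block becomes primitive, producing one such $\mathcal{M}$; the passage from one to infinitely many (and the guarantee that the relevant $\mathcal{M}$ may be taken primitive) is exactly the non-vanishing statement for Fourier--Jacobi coefficients established separately in this paper. For any such $\mathcal{M}$, the relation $\phi_{\mathcal{M}}=\sum_{\mu}h_{\mu}\theta_{\mathcal{M},\mu}\neq 0$ forces some theta component $h_{\mu}\neq 0$, and I fix this $\mu$; the stronger non-vanishing of \emph{all} theta components, also proved here, gives extra freedom that is convenient for the diagonal corollary.

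It then remains to find a sign change of the coefficients of the non-zero cusp form $h_{\mu}$ inside $(x,x+x^{3/5}]$. The analytic engine is the Rankin--Selberg mean square $\sum_{n\le x}|c(n)|^{2}=c_{\mu}x^{\kappa}+O(x^{\kappa-2/5})$ with $c_{\mu}>0$, whose error exponent $3/5$ dictates the length $x^{3/5}$: over the interval the main term gives $\sum_{x<m\le x+x^{3/5}}|c|^{2}\gg x^{\kappa-2/5}$. Suppose for contradiction that these coefficients keep one sign on the interval. Then, from $\sum|c|^{2}\le(\max|c|)\sum|c|$ and the individual bound $c(n)\ll n^{\kappa/2-2/7+\varepsilon}$ (Iwaniec for half-integral weight, i.e.\ $g$ odd; Deligne's sharper bound only helps when $g$ is even), constancy of sign converts this into $\big|\sum_{x<m\le x+x^{3/5}}c\big|=\sum|c|\gg x^{\kappa/2-4/35-\varepsilon}$. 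On the other hand, the functional equation of $h_{\mu}$ yields the partial-sum bound $\sum_{n\le x}c(n)\ll x^{\kappa/2-1/6+\varepsilon}$, so the interval sum is $\ll x^{\kappa/2-1/6+\varepsilon}$; since $4/35<1/6$, these two estimates are incompatible for $x\gg 1$, forcing a sign change. \textbf{The main obstacle} is the non-vanishing input of the middle step---producing infinitely many \emph{primitive} indices $\mathcal{M}$ with $\phi_{\mathcal{M}}\neq 0$ and a non-vanishing theta component---which is the genuinely new arithmetic content; once a non-zero $h_{\mu}$ with real coefficients is in hand the analytic sign-change argument is robust, and its binding constraint is the $3/5$ Rankin--Selberg error exponent rather than the individual or partial-sum bounds.
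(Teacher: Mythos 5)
Your proposal follows the same route as the paper: Fourier--Jacobi expansion with respect to a degree-$g$ index $\mathcal{M}$, theta decomposition of $\phi_{\mathcal{M}}$, identification of the Fourier coefficients of a non-zero theta component $h_\mu$ along an arithmetic progression with the sequence $\{a_F(T): T \in S_{\mathcal{M},\mu}\}$, and a sign-change criterion that plays the Rankin--Selberg second moment against an individual coefficient bound and a partial-sum bound. Deferring the arithmetic input (infinitely many primitive $\mathcal{M}$ with $\phi_{\mathcal{M}}\neq 0$ and a non-vanishing theta component) to the paper's intermediate results is consistent with how the paper itself organizes the proof of Theorem \ref{thm:1}; note only that the determination theorem needed for newforms on $\Gamma_0^{g+1}(N)$ is the Ibukiyama--Katsurada theorem (Theorem \ref{thm:IK}) rather than Yamana \cite{SY}, and that a single $GL_{g+1}(\mathbb{Z})$-move of one non-zero primitive coefficient cannot by itself produce infinitely many primitive indices: the paper manufactures these from Lemmas \ref{lemma:1}--\ref{lemma:3} combined with Propositions \ref{prop:1}, \ref{prop:main} and \ref{prop:2}.

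The genuine gap is in your analytic step in the case $g$ odd, where $h_\mu$ has half-integral weight $\kappa = k - \frac{g}{2}$. You assert the partial-sum bound $\sum_{n \le x} c(n) \ll x^{\kappa/2 - 1/6 + \epsilon}$ (normalized exponent $\beta = \frac{1}{3}$) ``from the functional equation'', but this bound is a theorem only for integral weight, where its proof uses Deligne's bound \cite{PD}; for half-integral weight it would require Ramanujan-quality control of the coefficients, which is open. The available substitute, and what the paper uses, is \cite[Corollary 3.5]{HKLW}, with normalized exponent $\beta = \frac{19}{48}+\epsilon$. With that corrected input, your choice of Iwaniec's individual bound $c(n) \ll n^{\kappa/2 - 2/7 + \epsilon}$ (normalized exponent $\alpha = \frac{3}{14}$) destroys the numerology: your contradiction argument needs $\alpha + \beta < \frac{3}{5}$, but $\frac{3}{14} + \frac{19}{48} = \frac{205}{336} > \frac{3}{5}$, so the method only yields sign changes in intervals of length about $x^{205/336}$, not $x^{3/5}$. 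This is exactly why the paper invokes Bykovskii's stronger bound \cite{VB}, normalized exponent $\frac{3}{16}+\epsilon$, which gives $\frac{3}{16} + \frac{19}{48} = \frac{7}{12} < \frac{3}{5}$. A secondary point: at interval length exactly $x^{3/5}$, the Rankin--Selberg error $O(x^{3/5+\epsilon})$ has the same size as the main-term increment, so your claim $\sum_{x < n \le x + x^{3/5}} |c(n)|^2 \gg x^{\kappa - 2/5}$ does not follow as written; the comparison requires the interval exponent to lie strictly above $\max(\alpha+\beta,\gamma)$, which is how \cite[Theorem 2.1]{HKLW} (used by the paper at this step) is formulated.
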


\begin{theorem}\label{thm:3}
Let $k, N, \chi$ be as in Theorem \ref{thm:1}. Let $F \in S_k^{2, {\rm new}}(N,\chi)$ 
be a non-zero Siegel cusp form with real Fourier coefficients $a_F(T)$. 
Then there are infinitely many odd primes $p$ such that 
for any such fixed prime $p$ and for any fixed $\mu \in \mathbb{Z}$, the sequence of Fourier coefficients 
$\{a_F(T):T \in S_{p, \mu}\}$ 
changes sign at least once
for $4 \det T \in (x, x+x^{3/5}]$ for $x \gg 1$.  
\end{theorem}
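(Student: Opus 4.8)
The plan is to transfer the problem, via the Fourier--Jacobi expansion of $F$ and the theta decomposition of its Fourier--Jacobi coefficients, into a sign-change statement in short intervals for the Fourier coefficients of a half-integral weight cusp form, and then to supply two non-vanishing inputs guaranteeing that the relevant half-integral weight form is non-zero for \emph{every} admissible $\mu$. This follows the same scheme one would use for Theorem \ref{thm:1}, but with prime index $p$ playing the role that a primitive $g\times g$ matrix $\mathcal{M}$ plays there.

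First I would expand
\[
F(Z) = \sum_{m \geq 1} \phi_m(\tau, z)\, e^{2\pi i m \tau'}, \qquad Z = \begin{pmatrix} \tau & z \\ z & \tau' \end{pmatrix},
\]
where each $\phi_m$ is a Jacobi cusp form of weight $k$, index $m$, level $N$ and character $\chi$. For $T = \begin{pmatrix} n & r/2 \\ r/2 & m \end{pmatrix}$ one has $a_F(T) = c_{\phi_m}(n,r)$, and this coefficient depends only on the discriminant $D = 4mn - r^2 = 4\det T$ and on the residue $r \bmod 2m$. Since every $T \in S_{p,\mu}$ has lower-right entry $m = p$ and off-diagonal entry $\mu/2$ with $\mu$ fixed, while $4\det T = D$ runs over the arithmetic progression $D \equiv -\mu^2 \pmod{4p}$, the sequence $\{a_F(T) : T \in S_{p,\mu}\}$ is exactly the sequence of Fourier coefficients of the $\mu$-th theta component $h_\mu$ of $\phi_p$, a cusp form of half-integral weight $k - \tfrac12$. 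These coefficients are real because the $a_F(T)$ are real.

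Next I would secure the two non-vanishing statements. From $F \neq 0$ together with the newform hypothesis and the Hecke theory of Fourier--Jacobi coefficients (the index-raising operators $V_p$, and, if needed, Yamana's determination of $F$ by its primitive coefficients), I would show that $\phi_p \neq 0$ for infinitely many odd primes $p$. For each such $p$ I would then prove that \emph{every} theta component $h_\mu$ of $\phi_p$ is non-zero: I would analyze the action of $SL_2(\mathbb{Z})$ on the components $(h_\mu)_{\mu \bmod 2p}$ through the Weil representation attached to the finite quadratic module $\mathbb{Z}/2p\mathbb{Z}$, and use that, for prime index, this action links the components tightly enough that the vanishing of a single $h_{\mu_0}$ would force the vanishing of all of them, contradicting $\phi_p \neq 0$. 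This is precisely what makes the conclusion hold for an arbitrary fixed $\mu \in \mathbb{Z}$, rather than for just one fortunate residue class.

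Finally, for a fixed such $p$ and fixed $\mu$, the form $h_\mu$ is a non-zero half-integral weight cusp form with real Fourier coefficients, and I would apply the sign-change result for such forms in short intervals: the coefficients $c(D)$ restricted to $D \equiv -\mu^2 \pmod{4p}$ change sign in every interval $D \in (x, x+x^{3/5}]$ for $x \gg 1$. This input combines a lower bound for the second moment of the coefficients over short intervals (of Rankin--Selberg type) with a non-trivial bound for their summatory function, the exponent $3/5$ being dictated by the strength of these estimates for half-integral weight forms. The step I expect to be the main obstacle is the non-vanishing of \emph{all} theta components simultaneously: controlling the entire vector $(h_\mu)_{\mu \bmod 2p}$ rather than merely producing one non-zero component is the delicate structural point, and it is exactly this uniformity that upgrades a single sign change into the statement valid for every $\mu$.
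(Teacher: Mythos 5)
Your overall architecture matches the paper's exactly: Fourier--Jacobi expansion of $F$, non-vanishing of $\phi_p$ for infinitely many odd primes $p$, non-vanishing of \emph{every} theta component of $\phi_p$, identification of $\{a_F(T): T\in S_{p,\mu}\}$ with the Fourier coefficients of the half-integral weight cusp form $h_\mu$, and then the sign-change criterion combining a Rankin--Selberg second moment with bounds on individual coefficients and on the summatory function (the paper uses the criterion of Hulse--Kuan--Lowry-Duda--Walker to get the exponent $3/5$). The reduction to $h_\mu$ and the final analytic step are fine. The genuine gap is in your first non-vanishing input, $\phi_p\neq 0$ for infinitely many odd primes: the tools you name do not deliver it. The index-raising operators $V_p$ relate Fourier--Jacobi coefficients of different indices only for eigenforms (e.g.\ in the Maass Spezialschar or via Hecke eigenvalue relations), and Theorem \ref{thm:3} makes no eigenform assumption, so there is no Hecke theory linking $\phi_m$ to $\phi_p$ here. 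Yamana's theorem --- or rather its newform version due to Ibukiyama--Katsurada, which is what applies to $S_k^{2,{\rm new}}(N,\chi)$ --- only yields a primitive matrix $T=\begin{pmatrix} n & r/2\\ r/2 & m\end{pmatrix}$ with $a_F(T)\neq 0$; it says nothing about the corner entry $m$ being prime, which is what $\phi_p\neq 0$ requires. The missing idea is the paper's quadratic-form step: the primitive positive definite binary form $Q(x)=x^tTx$ represents infinitely many odd primes (Weber's classical theorem); if $Q(x_p)=p$ then $x_p$ is a primitive vector, hence is the last column of some $A_p\in SL_2(\mathbb{Z})$, and $A_p^tTA_p$ has lower-right entry $p$; since $a_F(A_p^tTA_p)=a_F(T)\neq 0$ by unimodular invariance of the coefficients, $\phi_p\neq 0$. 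Without this (or an equivalent) argument your proof never gets off the ground.

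A secondary caution concerns your second input, the simultaneous non-vanishing of all $h_\mu$. Your plan to let $SL_2(\mathbb{Z})$ act on the vector $(h_\mu)_{\mu \bmod 2p}$ through the Weil representation is the right mechanism, but it cannot be run over all of $SL_2(\mathbb{Z})$: the form $\phi_p$ only transforms under the level-$N$ Jacobi group, so the linear relations $\sum_\mu \rho_{\mu,\nu}(M)\,c_\mu(T)=0$ triggered by the vanishing of one component are available only for $M\in\Gamma_0(N)$. The paper's Proposition \ref{prop:main} therefore constructs explicit words $M_1M_2^{l}M_1M_2^{m}M_1\in\Gamma_0(N)$ (with $l\equiv m\equiv 0 \pmod{4}$, $lm\equiv 1\pmod{N}$, $p\nmid(l-1)m$) and evaluates the resulting Gauss sums to check that $\rho_{\mu,\nu}(M)+\rho_{-\mu,\nu}(M)\neq 0$; this is exactly where the hypotheses that $N$ is odd and $\chi(-1)=(-1)^k$ are used. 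As written, your sketch glosses over precisely the part of this step that is new at level $N$.
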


By taking $\mu=0$ in Theorem \ref{thm:3}, we get the sign changes of the diagonal coefficients. 
More precisely, we have the following corollary.
\begin{corollary}\label{cor:1}
Let $F \in S_k^{2, {\rm new}}(N,\chi)$ be as in Theorem \ref{thm:3}. Then there are infinitely many odd primes 
$p$ such that for any such fixed prime $p$, the Fourier coefficients 
$a_F\left(\begin{pmatrix} n & 0 \\ 0 & p \end{pmatrix}\right)$
change sign at least once for $n \in (x, x+x^{3/5}]$ for $x \gg 1$.
\end{corollary}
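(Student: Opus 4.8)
The plan is to obtain Corollary~\ref{cor:1} as the specialization $\mu = 0$ of Theorem~\ref{thm:3}, together with a harmless change of variable in the detection window. First I would unwind the definition of $S_{\mathcal{M},\mu}$ in the relevant case $n = 1$, $\mathcal{M} = p$, $\mu = 0$. Since $\mathcal{M}^* = 1$ by the stated convention and $\mathcal{M}^*[\mu] = \mu^2 = 0$, every $T \in S_{p,0}$ has the form $T = \begin{pmatrix} m/(4p) & 0 \\ 0 & p \end{pmatrix}$ with $m \in \mathbb{Z}$. The requirement $T \in \mathcal{J}_2$ (half-integrality and positive definiteness) forces the diagonal entry $m/(4p)$ to be a positive integer, say $n$, so that $m = 4pn$; conversely every positive integer $n$ arises. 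Thus $S_{p,0}$ is exactly the family of diagonal matrices $\begin{pmatrix} n & 0 \\ 0 & p \end{pmatrix}$, $n \in \mathbb{Z}_{>0}$, and $4 \det T = m = 4pn$ on this family. In particular the sequence $\{a_F(T) : T \in S_{p,0}\}$ coincides with $\big\{ a_F\big(\begin{pmatrix} n & 0 \\ 0 & p \end{pmatrix}\big) \big\}_{n \geq 1}$, and ordering by $n$ agrees with ordering by $4\det T$.

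With this identification, Theorem~\ref{thm:3} already supplies the infinitely many odd primes $p$ and, for each fixed such $p$, a sign change of the sequence; the only thing left is to transport the interval. The interval in Theorem~\ref{thm:3} is written in terms of $4\det T$, while Corollary~\ref{cor:1} is written in terms of $n$, and these differ by the fixed factor $4p$. Given the parameter $x$ of the corollary, I would apply Theorem~\ref{thm:3} with $X := 4px$; since $p$ is fixed, $X \gg 1$ holds as soon as $x \gg 1$. The sign change guaranteed there occurs for $4\det T = 4pn \in (X, X + X^{3/5}]$, equivalently for
\[
n \in \left( x,\ x + (4p)^{-2/5}\, x^{3/5} \right].
\]
Because $4p > 1$ we have $(4p)^{-2/5} < 1$, so this interval is contained in $(x, x + x^{3/5}]$, and hence the sign change occurs with $n \in (x, x + x^{3/5}]$, which is the assertion of the corollary.

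I do not expect any genuine obstacle: all of the analytic and arithmetic substance is carried by Theorem~\ref{thm:3}, and the corollary is its $\mu = 0$ case combined with the elementary observation that rescaling $4\det T$ by the constant $4p$ contracts, rather than widens, the length-$x^{3/5}$ window in the variable $n$. The one point to record carefully is precisely this containment of intervals, ensuring that a sign change detected by Theorem~\ref{thm:3} is not lost in the passage from $4\det T$ to $n$.
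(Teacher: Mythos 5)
Your proposal is correct and takes essentially the same route as the paper, which obtains Corollary~\ref{cor:1} simply by setting $\mu = 0$ in Theorem~\ref{thm:3}. Your explicit handling of the passage from the variable $4\det T = 4pn$ to the variable $n$ (noting the window contracts by the factor $(4p)^{-2/5} < 1$ and hence remains inside $(x, x+x^{3/5}]$) is a detail the paper leaves implicit, and you resolve it correctly.
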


Suppose $F$ is a non-zero Siegel-Hecke eigenform of even weight for the group $Sp_2(\mathbb{Z})$.  
By using \cite[Theorem 1]{MM} we get that the first Fourier-Jacobi coefficient of $F$ is non-zero and then by following the method of this paper one can easily see that the odd prime $p$ can be replaced by $1$ in 
Theorem \ref{thm:3} and Corollary \ref{cor:1}. Hence, in this case we get the sign changes of the primitive, diagonal $GL_2(\mathbb{Z})$-inequivalent Fourier coefficients.

\begin{corollary}
Let $F$ be a non-zero Siegel-Hecke eigenform of even weight for the group $Sp_2(\mathbb{Z})$. 
Then the Fourier coefficients 
$a_F\left(\begin{pmatrix} n & 0 \\ 0 & 1 \end{pmatrix}\right)$
change sign at least once for $n \in (x, x+x^{3/5}]$ for $x \gg 1$.
\end{corollary}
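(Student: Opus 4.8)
The plan is to specialize the entire argument behind Theorem \ref{thm:3} to the first Fourier--Jacobi coefficient, that is, to index $\mathcal{M}=1$ and $\mu=0$. Since $F$ is a Hecke eigenform of even weight $k$ for $Sp_2(\mathbb{Z})$, we are in the situation $N=1$ with $\chi$ trivial, so that $\chi(-1)=1=(-1)^k$ and all hypotheses of Theorem \ref{thm:3} hold. Taking $\mathcal{M}=1$ (as a $1\times 1$ matrix, which is automatically primitive) and $\mu=0$, the definition of $S_{\mathcal{M},\mu}$ gives $T=\begin{pmatrix} (m+0)/4 & 0 \\ 0 & 1 \end{pmatrix}$ with $4\det T=m$; half-integrality forces $m=4n$ with $n\in\mathbb{Z}_{>0}$, so that $S_{1,0}$ is exactly the set of matrices $\begin{pmatrix} n & 0 \\ 0 & 1 \end{pmatrix}$. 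Thus the corollary is precisely the $\mu=0$ instance of Theorem \ref{thm:3} with the odd prime $p$ replaced by $1$, and the only thing to justify is that this replacement is legitimate.

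The sole role of the primality (and oddness) of $p$ in the proof of Theorem \ref{thm:3} is to guarantee that the relevant Fourier--Jacobi coefficient $\phi_{p}$ of $F$ is non-zero; in general one can only secure this for infinitely many indices $p$, which is why the theorem is phrased that way. Here, by contrast, \cite[Theorem 1]{MM} tells us outright that the first Fourier--Jacobi coefficient $\phi_{1}$ of $F$ is non-zero. With this one input, I would rerun the argument of Theorem \ref{thm:3} verbatim at index $1$: $\phi_{1}$ is a non-zero Jacobi cusp form of weight $k$ and index $1$ for the full Jacobi group, and the theta-component non-vanishing lemma established in this paper forces its $\mu=0$ theta component $h_{0}$ to be a non-zero cusp form of half-integral weight $k-1/2$.

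It then remains to transfer the sign-change statement to $h_0$. By the theta decomposition $\phi_1=\sum_{\mu} h_{\mu}\theta_{\mu}$, the coefficient $a_F\begin{pmatrix} n & 0 \\ 0 & 1 \end{pmatrix}$ equals the Fourier coefficient of $h_{0}$ attached to the discriminant $4n-0^2=4n$. Feeding $h_0$ into the same short-interval sign-change result for Fourier coefficients of half-integral weight cusp forms that underlies the proof of Theorem \ref{thm:3}, we obtain at least one sign change of $\{a_F\begin{pmatrix} n & 0 \\ 0 & 1 \end{pmatrix}\}$ as the discriminant $4\det T=4n$ runs through an interval $(X,X+X^{3/5}]$; writing $X=4x$ turns this into a sign change for $n$ in an interval $(x,x+c\,x^{3/5}]$ with $c=4^{-2/5}\le 1$, which is contained in $(x,x+x^{3/5}]$, giving the assertion for $x\gg 1$.

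The substantive content is therefore entirely front-loaded: \cite[Theorem 1]{MM} for $\phi_1\neq 0$, the paper's theta-component lemma for $h_0\neq 0$, and the analytic short-interval input already in place for Theorem \ref{thm:3}; once these are granted the corollary is a routine specialization. The one point requiring genuine care, and the only real obstacle, is the non-vanishing of $h_0$ rather than merely of $\phi_1$: a non-zero Jacobi form could a priori have its $\mu=0$ component vanish, so it is the theta-component non-vanishing result that is indispensable for reaching the diagonal coefficients with $\mathcal{M}=1,\ \mu=0$.
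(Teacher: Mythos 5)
Your proposal is correct and takes essentially the same route as the paper: the authors likewise invoke \cite[Theorem 1]{MM} to conclude $\phi_1\neq 0$ and then rerun their own machinery --- the theta-component non-vanishing of Proposition \ref{prop:main} adapted to index $1$, followed by the half-integral-weight sign-change argument of Theorem \ref{thm:3} --- with the odd prime $p$ replaced by $1$. Your explicit handling of the interval rescaling ($X=4x$, $c=4^{-2/5}\le 1$) and your identification of $h_0\neq 0$ as the crux merely spell out what the paper leaves implicit.
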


In a recent article \cite{ALS}, Asaari, Lester and Saha discuss in detail the sign changes of the Fourier 
coefficients $a_F(T)$ of Siegel cusp forms of degree $2$ and odd, square-free level for $
T \in \mathcal{J}_2$ such that $4 \det T$ is odd and square-free.   

\begin{rmk}
%\begin{enumerate}
In Theorem \ref{thm:1}, for fixed $\mathcal{M}$ and $\mu$ the considered matrices 
$T \in S_{\mathcal{M}, \mu}$ for sign changes 
are primitive and $GL_g(\mathbb{Z})$-inequivalent. Similarly, in Theorem \ref{thm:3}
the considered matrices in the set $S_{p, \mu}$ are $GL_2(\mathbb{Z})$-inequivalent. 
Moreover, by choosing $\mu$ coprime to $p$, we get sign changes of the primitive 
$GL_2(\mathbb{Z})$-inequivalent Fourier coefficients.
%\end{enumerate} 
\end{rmk}

\begin{rmk}
If $F$ is a Siegel cusp form with Fourier coefficients $a_F(T)$ then the Fourier series with coefficients 
Re$(a_F(T))$ or Im$(a_F(T))$ are also Siegel cusp forms. Therefore Theorem \ref{thm:1} and
Theorem \ref{thm:3} can be reformulated for 
real and imaginary parts of the Fourier coefficients $a_F(T)$ if they are not assumed to be real.
\end{rmk}

This article is organized as follows. In the next section we set the notations, recall the definition of Siegel cusp forms, discuss its Fourier-Jacobi decomposition, and the theta decomposition of Jacobi cusp forms. 
In \S 3 we state our intermediate results about Jacobi forms which are useful to prove our main theorems and 
are of independent interest as well. 
In \S 4 we establish some lemmas from the theory of quadratic forms which are used to prove the intermediate results. In \S 5 we prove all our results about Jacobi forms, and then we establish 
Theorems \ref{thm:1} and \ref{thm:3} in \S 6.

\section{Notations and Preliminaries}

Let $R$ be a commutative ring with identity $1$. For any two positive integers $m$ and $n$, let $R^{m,n}$ denotes the space of $m \times n$ matrices with entries in $R$. If $m=1$ we simply write $R^n$ in place of 
$R^{1,n}$ and also we use the notation $M_n(R)$ for $R^{n,n}$. Let $I_n$ denote the identity matrix of size 
$n$. We denote the set of all $n \times n$ symmetric, half-integral, positive definite matrices in 
$M_n(\mathbb{Q})$ by $\mathcal{J}_n$. For any $X \in M_n(\mathbb{C})$, let $e(X):=e^{2\pi i \ \trace(X)}$.
%We denote the group of invertible matrices in $M_n(R)$ by $GL_n(R)$ and the group of $GL_n(R)$-
%matrices of determinant $1$ by $SL_n(R)$. 
%For any $A \in R^{m,n}$, let $A^t$ denote its transpose matrix.
%and $\gcd (A)$ denote the gcd of all the entries of A, if exists. 
%For any two matrices $A$ and 
%$X$, $X^t$ denotes the transpose of $X$ and $A[X] := X^tAX$, if exists.   
%Let $\mathbb{Z}$, $\mathbb{Q}$, $\mathbb{R}$ and $\mathbb{C}$ denote the set of integers, rational numbers, %real numbers and complex numbers respectively. 
%For any two matrices $A, X \in M_n(\mathbb{Z})$, define $A[X]:= X^tAX$, where $X^t$ denotes the transpose %matrix of $X$.
For any two matrices $A$ and $X, A[X] := X^tAX (X^t$ refers to the transpose of $X)$. We denote the cofactor matrix of any square matrix $A$ by $A^*$.

We say that a matrix $A \in M_n(\mathbb{Q})$ is {\it half-integral} if it has integral diagonal entries and half-integral off-diagonal ones. Following \cite{SY}, define the content of a half-integral matrix $A$ by
$$\max\{a \in \mathbb{Z} : a^{-1}A \ {\text{is half-integral}}\}.$$ 
We call half-integral matrices of content one {\it primitive}.

Let $n$ be a positive integer and $Sp_n(\mathbb{Z})$ be the Siegel modular group of degree $n$
$$
Sp_n(\mathbb Z)=\left\{ M \in M_{2n}(\mathbb Z) : J [M] = J, J = \begin{pmatrix} 0 & I_n\\ -I_n & 0 \end{pmatrix}\right\},
$$
and $\mathbb{H}_n$ be the Siegel upper-half space of degree $n$
$$\mathbb{H}_n =\{ Z \in M_n(\mathbb C) : Z=Z^t, {\rm Im} (Z) \ {\text {is positive definite}}\},$$
where ${\rm Im} (Z)$ denotes the imaginary part of the matrix $Z$. 
The group $Sp_n(\mathbb{Z})$ acts on $\mathbb{H}_n$ as follows. 
$$M Z= (AZ+B) (CZ+D)^{-1}, M=\begin{pmatrix} A & B\\ C & D \end{pmatrix} \in Sp_n(\mathbb{Z}), 
Z \in \mathbb{H}_n.$$ 
For any positive integer $N$, let 
$$
\Gamma_0^n(N) = \left\{\begin{pmatrix} A&B\\C&D \end{pmatrix} \in Sp_n(\mathbb{Z}) | C \equiv 0 \pmod{N}      \right\}
$$
be the Hecke congruence subgroup of $Sp_n(\mathbb{Z})$ of level $N$. Let $\chi$ be any Dirichlet character 
modulo $N$. We write $\chi(M) = \chi(\det D)$ whenever $M = \begin{pmatrix} A&B\\C&D \end{pmatrix} \in 
\Gamma_0^n(N)$. 

\begin{definition}
Let $n \geq 2$. Let $k, N$ be positive integers and $\chi$ be a Dirichlet character modulo $N$. A {\it Siegel modular form} of weight $k$, level $N$, degree $n$ with character $\chi$ is a holomorphic function $F : \mathbb{H}_n \rightarrow \mathbb{C}$ such that 
$$F|_k M (Z) := \det (CZ+D)^{-k} F(M Z)=\chi(M)F(Z), \ {\rm for \ all} \ M=\begin{pmatrix} A & B \\ C & D \end{pmatrix} \in \Gamma_0^n(N).$$ 
Moreover, we say that $F$ is a cusp form if it vanishes at every cusp of $\Gamma_0^n(N)$. 
\end{definition}

We denote by $M_k^n (N, \chi)$, the finite dimensional $\mathbb{C}$-vector space of all Siegel modular forms of weight $k$, level $N$, degree $n$ with character $\chi$, and its subspace of cusp forms will be denoted  by $S_k^n(N,\chi)$.
Any $F\in S_k^n(N,\chi)$ has a unique Fourier series expansion 
\begin{equation}\label{eq:fourierseries}
F(Z)=\sum_{T\in\mathcal{J}_n} a_F(T) e(TZ), \ Z \in \mathbb{H}_n.
\end{equation}

Let us recall the following theorem 
\cite[Theorem 2.2]{IK} which will be used latter.  
\begin{theorem}\label{thm:IK}
Assume that $n \geq 2$. Assume that $F \in S_k^{n, {\rm new}}(N,\chi)$ and its Fourier coefficients $a_F(T) = 0$ for all the primitive matrices $T \in \mathcal{J}_n$. Then we have $F = 0$.
\end{theorem}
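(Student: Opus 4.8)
The plan is to prove the contrapositive: a nonzero newform $F$ must possess at least one nonzero \emph{primitive} Fourier coefficient. That the restriction to the new space (and the hypothesis $n \ge 2$) is indispensable is explained by a guiding example: for a prime $p$ with $m_\chi \mid N/p$ the old form $H(pZ)$ with $H\in S_k^n(N/p,\chi)$ has \emph{no} primitive coefficients at all, since in $H(pZ)=\sum_S a_H(S)\,e(pSZ)$ every index $T=pS$ has content divisible by $p$. Thus the scaling (degeneracy) operators $H\mapsto H(pZ)$ manufacture nonzero old forms supported entirely off the primitive locus, and the assertion to be proved is that the new space $S_k^{n,\mathrm{new}}(N,\chi)$ contains no such nonzero form. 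This is the congruence-subgroup analogue of Yamana's determination result \cite{SY}.

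The starting structural remark is that every $T\in\mathcal{J}_n$ factors uniquely as $T=c\,T_0$ with $c=c(T)\in\mathbb{Z}_{>0}$ its content and $T_0\in\mathcal{J}_n$ primitive (indeed $c^{-1}T$ is half-integral by the definition of content and is forced to be primitive by the maximality of $c$), so the hypothesis reads $a_F(T)=0$ whenever $c(T)=1$. The engine I would use is Hecke theory. For a Hecke eigenform $F$ and a good prime $p\nmid N$, the Andrianov-type action of the Hecke operator $T(p)$ on the Fourier expansion \eqref{eq:fourierseries} yields relations in which the top-content coefficient $a_F(pT)$ is expressed through the eigenvalue $\lambda_p$ and coefficients $a_F(T')$ with $c(T')<c(pT)$. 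Running these relations as a recursion on the content $c(T)$ shows that an eigenform is completely determined by its primitive coefficients \emph{together with} its Hecke eigenvalues; so if, in addition, the primitive coefficients already encode the eigenvalues, then their vanishing propagates along the induction to force $a_F\equiv 0$, i.e. $F=0$.

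Two points then remain, and they are where I expect the real difficulty to lie. First, one must show that for $n\ge 2$ the primitive coefficients alone --- with no a priori knowledge of the $\lambda_p$ --- already suffice: here the richness of the set of primitive matrices when $n\ge 2$ is essential, as it provides enough $T(p)$-relations supported within the primitive locus to recover the good Hecke data, in sharp contrast to the degree-one situation where primitivity collapses to the single index and the statement fails. Second, one must descend from eigenforms to an arbitrary $F\in S_k^{n,\mathrm{new}}(N,\chi)$ and simultaneously control the bad primes $p\mid N$; this calls for the local new-vector theory at $p\mid N$ (providing an analogous recursion through the $U_p$-eigenvalue so the content induction closes at every prime) together with the semisimplicity / strong multiplicity one of the new space, which is what guarantees that the primitive-coefficient functionals separate distinct Hecke eigensystems and thereby upgrade injectivity on eigenforms to injectivity on the whole space. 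The content factorization and the Andrianov recursion are by comparison the routine ingredients; the genuine obstacle is establishing that no nonzero new form can remain hidden in the imprimitive locus, i.e. that these primitive-coefficient functionals are jointly injective on $S_k^{n,\mathrm{new}}(N,\chi)$.
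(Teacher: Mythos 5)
Your proposal is a program rather than a proof: the two points you yourself defer as ``where the real difficulty lies'' are precisely the content of the theorem, and neither can be completed along the route you describe. Concretely, the Andrianov-type relation for $T(p)$ in degree $n\ge 2$ does \emph{not} express $a_F(pT)$ through the eigenvalue and coefficients of strictly smaller content: besides $a_F(T)$ and $a_F(p^{-1}T)$ it involves terms $a_F\bigl(p^{-1}T[U]\bigr)$ with $\det U=p$, whose content need not be smaller than that of $pT$. For instance, with $T=\begin{pmatrix}1&0\\0&p^{2}\end{pmatrix}$ (primitive) and $U=\begin{pmatrix}p&0\\0&1\end{pmatrix}$ one gets $p^{-1}T[U]=\begin{pmatrix}p&0\\0&p\end{pmatrix}$, of content $p$, the same as $c(pT)$; so the induction on content does not close. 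More seriously, the passage from eigenforms to all of $S_k^{n,\mathrm{new}}(N,\chi)$ rests on ``semisimplicity / strong multiplicity one of the new space'' and on a ``local new-vector theory at $p\mid N$''. Neither exists at this level of generality: the paper's new space is defined purely as an orthogonal complement, there is no Atkin--Lehner/new-vector theory for $Sp_{2n}$ of arbitrary degree and level in the form you need, and multiplicity one is known to fail for automorphic representations of $Sp_4$ (Howe--Piatetski-Shapiro), so distinct newforms need not be separated by their good Hecke eigensystems. An argument whose last step is ``the primitive-coefficient functionals separate Hecke eigensystems'' therefore cannot be salvaged as stated.

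For comparison: the paper does not prove this statement at all; it is quoted from Ibukiyama--Katsurada \cite[Theorem 2.2]{IK}, and the proof there (like Zagier's \cite{DZ} and Yamana's \cite{SY} before it) uses no Hecke eigenvalues. The working mechanism is the one your own ``guiding example'' points to: content divisible by $p$ means exactly $T\in p\mathcal{J}_n$, so a form all of whose primitive coefficients vanish can be disassembled, via M\"obius inversion over contents together with the level-changing operators $F\mapsto F|U(d)$ (extracting $a_F(dT)$) and $G(Z)\mapsto G(dZ)$, into an explicit element of the old space; the Atkin--Lehner-type decomposition of $S_k^n(N,\chi)$ established in \cite{IK} is what legitimizes this disassembly, and orthogonality of new and old then forces $F=0$. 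If you want a complete argument, that combinatorial old-form route --- not Hecke recursion plus multiplicity one --- is the one to pursue.
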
 

\subsection{Jacobi forms and Fourier-Jacobi expansion of Siegel cusp forms}

Let $g, j$ be any two positive integers and $k, N, \chi$ be as above. Let $\mathcal{M}$ be a symmetric, half-integral, positive definite $j \times j$ matrix. Let $\Gamma_0^{g,J}(N):=\Gamma_0^g(N)\ltimes (\mathbb Z^{j,g} \times \mathbb Z^{j,g})$ be the Jacobi group of level $N$. The Jacobi group acts on the set of holomorphic functions 
$\phi:\mathbb{H}_g \times \mathbb{C}^{j,g} \rightarrow \mathbb{C}$ parametrized by the pair $(k,\mathcal{M})$ 
as follows.
\begin{equation*}
\phi |_{k, \mathcal{M}} \gamma (\tau, w) = \det(C\tau+D)^{-k} e(-\mathcal{M} w (C\tau + D)^{-1} C w^t) \phi(\mathcal{M} \tau, w (C\tau+D)^{-1}),
\end{equation*}
for all $\gamma=
\begin{pmatrix} A & B \\ C & D \end{pmatrix} \in \Gamma_0^g(N)$, and 
\begin{equation*}
\phi |_{\mathcal{M}} (\lambda, \mu) (\tau, w) = e(\mathcal{M}(\tau[\lambda^t]+2 \lambda w^t + \mu \lambda^t)) 
\phi(\tau, w+\lambda \tau+\mu), \ {\rm for\ all} \ (\lambda, \mu) \in \mathbb Z^{j,g} \times 
\mathbb Z^{j,g}.
\end{equation*}

\begin{definition}
A Jacobi form of weight $k$, index $\mathcal{M}$, level $N$, degree $g$ with character $\chi$ is a 
holomorphic function 
$\phi:\mathbb{H}_g \times \mathbb{C}^{j,g} \rightarrow \mathbb{C}$ such that
$$
\phi |_{k,\mathcal{M}}\gamma (\tau, w) = \chi(\gamma) \phi (\tau, w)\ {\rm and}\
\phi |_{\mathcal{M}} (\lambda, \mu) (\tau, w) = \phi (\tau, w),
$$
for all $[\gamma, (\lambda, \mu)] \in \Gamma^{g,J}_0(N)$, and if $g=1$ then $\phi$ has a Fourier series expansion of the form
$$
\phi(\tau, w) = \sum_{{n \geq 0, r \in \mathbb{Z}^{j}}\atop{4n - \mathcal{M}^{-1}[r^t]} \geq 0} c(n,r) e(n\tau) e(r w)
$$
at all the cusps. 

Moreover, we say that $\phi$ is a cusp form if it vanishes at all the cusps of $\Gamma_0^{g,J}(N)$.
\end{definition}

For more details on Jacobi forms we refer to \cite{EZ, CZ}. The space of Jacobi forms (resp. Jacobi cusp forms) of weight $k$, index $\mathcal{M}$, level $N$, degree $g$ with character $\chi$ is a finite-dimensional $\mathbb{C}$-vector space, denoted by $J^g_{k,\mathcal{M}}(N,\chi)({\rm resp.}\ J^{g, \rm cusp}_{k,\mathcal{M}}(N,\chi))$. If $g=1$, we also  denote these spaces by $J_{k,\mathcal{M}}(N,\chi)({\rm resp.}\ J^{\rm cusp}_{k,\mathcal{M}}(N,\chi))$. 

Let $F \in S_k^{g+j}(N,\chi)$ be a Siegel cusp form having Fourier series expansion 
of the form \eqref{eq:fourierseries}. Let us write
$$Z=\begin{pmatrix} \tau & w^t \\ w & \tau' \end{pmatrix}, \ \tau \in \mathbb{H}_{g}, \tau'\in \mathbb{H}_j, w \in \mathbb{C}^{j, g}.$$
Then one has the following well-known {\it Fourier-Jacobi expansion} of $F$ \cite{EZ, CZ}.
\begin{equation}\label{eq:FJE}
F(Z)=\sum_{\mathcal{M} \in \mathcal{J}_j} \phi_{\mathcal{M}}(\tau, w) e(\mathcal{M} \tau'),
\end{equation}
where each $\phi_{\mathcal{M}} \in J^{g, {\rm cusp}}_{k,\mathcal{M}}(N, \chi)$ is a Jacobi cusp form with the following Fourier series expansion.
\begin{equation}\label{eq:fourierphim}
\phi_{\mathcal{M}}(\tau,w)=\sum_{T_1 \in \mathcal{J}_g, r\in \mathbb Z^{g, j}} 
a_F \left(\begin{pmatrix} T_1 & \frac{1}{2} r \\ \frac{1}{2}r^t & \mathcal{M} \end{pmatrix} \right) e(T_1 \tau + r w),
\end{equation}
with the condition that the matrices $\begin{pmatrix} T_1 & \frac{1}{2} r \\ \frac{1}{2}r^t & \mathcal{M} 
\end{pmatrix}$
are positive definite.
For fix $\mathcal{M} \in \mathcal{J}_g$ we denote 
$a_F\left(\begin{pmatrix} T_1 & \frac{1}{2} r \\ \frac{1}{2}r^t & \mathcal{M} \end{pmatrix}\right)$ by $c(T_1, r)$, then \eqref{eq:fourierphim} takes the following form. 
\begin{equation}\label{eq:fourierphim1}
\phi_{\mathcal{M}}(\tau,w)=\sum_{T_1 \in  \mathcal{J}_g} \sum_{r\in \mathbb Z^{g,j}} c(T_1,r) e(T_1 \tau + r w),
\end{equation}
where the coefficients $c(T_1,r)$ is non-zero only if the matrix $\begin{pmatrix} T_1 & \frac{1}{2} r \\ \frac{1}{2}r^t & \mathcal{M} \end{pmatrix}$ is positive definite.

\subsection{The theta decomposition of Jacobi cusp forms}

The invariance of the Jacobi cusp form $\phi \in J_{k,\mathcal{M}}^{g, {\rm cusp}}(N, \chi)$
with respect to the subgroup $\{I_g\} \ltimes \left( \mathbb{Z}^{j,g} \times \mathbb{Z}^{j,g}\right)$ of 
$\Gamma_0^{g,J}(N)$ is equivalent to the set of following relations among Fourier coefficients.
\begin{equation}\label{f rel among fourier coeffs}
c(T, r) = c\left(T+2^{-1}(r\lambda+\lambda^t r^t)+ \mathcal{M}[\lambda], r+2 \lambda^t \mathcal{M} \right),
\end{equation}
for any $\lambda \in \mathbb{Z}^{j, g}$. 
For any $r \in \mathbb{Z}^{g,j}$ and $\mathcal{T} \in \mathcal{J}_g$, let
$c_r(\mathcal{T}) := c(T,r)$ if $\mathcal{T} = 4 (\det \mathcal{M}) T - \mathcal{M}^*[r^t]$ for some $T \in \mathcal{J}_g$, otherwise $c_r(\mathcal{T}) := 0$. 
%Equation \eqref{f rel among fourier coeffs} can be rewritten 
%as $c_r(N) = c_{r'}(N)$ whenever $R-R' \in 2m\mathbb{Z}^{g,1}$. 

Let us denote the quotient group $\mathbb{Z}^{g,j} / \mathbb{Z}^{g,j} 2 \mathcal{M}$ by  $\mathcal{I}_{g,j}$.
If $g=1$, we simply write $\mathcal{I}_j$ in place of $\mathcal{I}_{1,j}$.
For any $r \in \mathcal{I}_{g,j}$, consider the Jacobi theta series
\begin{equation*}\label{f theta ser}
\Theta_{\mathcal{M},r}(\tau, w) = \sum_{\lambda \in \mathbb{Z}^{j, g}} e\left(
\mathcal{M} [\lambda + (2\mathcal{M})^{-1} r^t] \tau + 2 \mathcal{M} (\lambda + (2\mathcal{M})^{-1} r^t) w^t \right), \ (\tau,w) \in 
\mathbb{H}_g\times \mathbb{C}^{j,g}.
\end{equation*}
Consider the column vector 
$\Theta(\tau,w)=(\dots, \Theta_{\mathcal{M}, r}(\tau, w), \dots)_{r \in \mathcal{I}_{g,j}}^t$. 
From \cite{GS, TS}, we have a unitary map $\rho$ on $Sp_g(\mathbb{Z})$ such that
\begin{equation*}\label{eq:thetatrans}
\Theta(\gamma (\tau,w)):=\Theta\left((A\tau+B) (C\tau+D)^{-1}, w (C\tau+D)^{-1} \right)=
\det(C\tau + D)^{1/2} \rho(\gamma) \Theta (\tau,w), 
\end{equation*}
for any $\gamma = \begin{pmatrix} A & B \\ C & D \end{pmatrix}
\in Sp_g(\mathbb{Z})$.
A formal manipulation of the Fourier series of $\phi$ (given by \eqref{eq:fourierphim1}) and using 
\eqref{f rel among fourier coeffs} yields the following theta decomposition of the Jacobi cusp form $\phi$.
\begin{equation}\label{f theta decomp}
\phi(\tau,w) = \sum_{r \in {\mathcal{I}_{g,j}}} h_r (\tau)
\Theta_{\mathcal{M}, r} (\tau, w),
\end{equation}
where 
$$
h_r(\tau) = \sum_{T \in \mathcal{J}_g} c_r(T) e\left(\frac{1}{4 \det \mathcal{M}} T \tau\right).
$$

Now we record a proposition giving transformation properties of the theta components $h_r$ in case $g=1$, which will be used later. Though the proof can be found in the literature but for completion we also give a sketch 
of the proof here.  
\begin{prop}\label{prop:hmutrans}
Let $j \geq 1$. Let $\mathcal{M}$ be a symmetric, half-integral, positive definite $j \times j$ matrix and 
$\phi \in J^{\rm cusp}_{k,\mathcal{M}}(N,\chi)$ with the theta decomposition \eqref{f theta decomp}.
Then for any $r \in \mathcal{I}_j$, we have $h_r \in S_{k-\frac{j}{2}}(\Gamma(N'))$, where $N'=4Nd'$ and 
$d'=\frac{1}{2}\det(2\mathcal{M})$ or $\det(2\mathcal{M})$ depending on $j$ is odd or even respectively. 
%Moreover, $h_\mu (N'\tau) \in S_{k-\frac{g}{2}}(\Gamma_1({N'}^2))$.
\end{prop}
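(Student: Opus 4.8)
The plan is to extract the transformation behaviour of the vector $h=(h_r)_{r\in\mathcal I_j}$ from the two automorphy laws already in hand: the invariance of $\phi$ under the Jacobi group $\Gamma_0^{1,J}(N)$ and the transformation of the theta vector $\Theta=(\Theta_{\mathcal M,r})_r$ under $SL_2(\mathbb Z)=Sp_1(\mathbb Z)$ recalled above, which carries the weight factor $\det(C\tau+D)^{j/2}$ and the finite unitary Weil representation $\rho$. First I would fix $\gamma=\begin{pmatrix} a & b \\ c & d\end{pmatrix}\in\Gamma_0(N)$ and substitute the theta decomposition \eqref{f theta decomp} into the identity $\phi|_{k,\mathcal M}\gamma=\chi(\gamma)\phi$. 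Since each $\Theta_{\mathcal M,r}$ is itself a (vector-valued) Jacobi form of weight $\tfrac{j}{2}$ and index $\mathcal M$, the index-$\mathcal M$ exponential automorphy factors carried by $\phi$ and by $\Theta$ cancel, leaving a relation between $\sum_{r'}\chi(d)h_{r'}(\tau)\Theta_{\mathcal M,r'}(\tau,w)$ and $(c\tau+d)^{j/2-k}\sum_{r'}\big(\sum_r \rho(\gamma)_{r,r'}h_r(\gamma\tau)\big)\Theta_{\mathcal M,r'}(\tau,w)$.

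Next I would invoke the linear independence of $\{\Theta_{\mathcal M,r}\}_{r\in\mathcal I_j}$ over the holomorphic functions of $\tau$ — they are separated by their $w$-expansions, equivalently by the Heisenberg relations \eqref{f rel among fourier coeffs} — to compare the coefficient of each $\Theta_{\mathcal M,r'}$. Using the unitarity of $\rho$ this gives the vector transformation law $h(\gamma\tau)=\chi(d)\,(c\tau+d)^{k-j/2}\,\overline{\rho(\gamma)}\,h(\tau)$ for every $\gamma\in\Gamma_0(N)$, exhibiting $h$ as a vector-valued modular form of weight $k-\tfrac{j}{2}$ on $\Gamma_0(N)$ with multiplier $\chi\cdot\overline{\rho}$.

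To pass to the individual components on a principal congruence subgroup I would then restrict to $\gamma\in\Gamma(N')$ with $N'=4Nd'$. As $N\mid N'$ we have $\Gamma(N')\subseteq\Gamma_0(N)$ and $\chi(d)=1$ there; the factor $4$ ensures that the weight-$\tfrac{j}{2}$ theta multiplier is single-valued and trivial on $\Gamma(N')$ (this is the only place the genuinely half-integral case, $j$ odd, intervenes); and the choice of $d'$ forces $\rho(\gamma)=I$. Granting this, the law collapses to $h_r(\gamma\tau)=(c\tau+d)^{k-j/2}h_r(\tau)$ for every $r$ and every $\gamma\in\Gamma(N')$, so each $h_r$ is modular of weight $k-\tfrac{j}{2}$ on $\Gamma(N')$. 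Cuspidality is read off from $h_r(\tau)=\sum_{T\in\mathcal J_1}c_r(T)\,e\big(\tfrac{T}{4\det\mathcal M}\tau\big)$: the summation runs over positive definite $T$ because $\phi$ is a Jacobi cusp form, so every exponent is strictly positive and $h_r$ vanishes at $\infty$; vanishing at the remaining cusps follows from the modularity on $\Gamma(N')$ together with the cuspidality of $\phi$.

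The main obstacle is the level computation in the third step, namely proving that the Weil representation $\rho$ attached to the even integral matrix $2\mathcal M$, combined with the half-integral weight multiplier, trivializes exactly on $\Gamma(N')$, and pinning down the correct $d'$ with its dependence on the parity of $j$. This relies on $\rho$ factoring through a finite quotient whose level divides $\det(2\mathcal M)$, and on the parity refinement $d'=\tfrac{1}{2}\det(2\mathcal M)$ for odd $j$ (legitimate since $\det(2\mathcal M)=2^{j}\det\mathcal M$ is even), the dichotomy reflecting whether the form $\lambda\mapsto\mathcal M[\lambda]$ is integer- or half-integer-valued and hence whether the weight $\tfrac{j}{2}$ is integral. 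I would extract these facts from the transformation theory of theta series in \cite{GS,TS}, the case $j=1$ being the classical Eichler--Zagier computation, which indeed gives $N'=4Nm$ for scalar index $m=\mathcal M$.
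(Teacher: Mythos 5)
Your skeleton is the same as the paper's: substitute the theta decomposition into $\phi|_{k,\mathcal M}\gamma=\chi(\gamma)\phi$, cancel the index-$\mathcal M$ exponential factors, use linear independence of the $\Theta_{\mathcal M,r}$, and read off the transformation of the $h_r$ from known transformation properties of theta series; the paper runs exactly this argument, quoting Ajouz's explicit formulas for $\theta_{\mathcal M,r}$ on $\Gamma_0(4d')$ ($j$ odd) and $\Gamma_0(d')$ ($j$ even). Your derivation of the vector-valued law $h(\gamma\tau)=\chi(d)(c\tau+d)^{k-j/2}\overline{\rho(\gamma)}\,h(\tau)$ on $\Gamma_0(N)$ is correct.

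The genuine gap is in your third step, and it sits exactly where the proposition's parity dichotomy lives. For $j$ odd the multiplier does \emph{not} trivialize on $\Gamma(N')$, and no enlargement of the level can make it do so: the half-integral weight theta multiplier carries the Kronecker symbol $\left(\frac{c}{d}\right)$, which is not identically $1$ on any principal congruence subgroup $\Gamma(M)$ --- for instance $\gamma=\left(\begin{smallmatrix}113&80\\24&17\end{smallmatrix}\right)\in\Gamma(8)$ has $\left(\frac{24}{17}\right)=\left(\frac{7}{17}\right)=-1$. Equivalently, the principal branch of $(c\tau+d)^{1/2}$ is not a cocycle on $\Gamma(M)$, so a nonzero $h_r$ can never satisfy the clean law $h_r(\gamma\tau)=(c\tau+d)^{k-j/2}h_r(\tau)$ for all $\gamma\in\Gamma(N')$ when $k-\frac j2$ is half-integral. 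What the argument actually yields --- and what the paper itself uses later, in the proof of Theorem \ref{thm:3}, where the displayed law on $\Gamma(4Np)$ is $h_r(\gamma\tau)=\left(\frac{c}{d}\right)(c\tau+d)^{k-1/2}h_r(\tau)$ --- is that on $\Gamma(N')$ the character part (the quadratic symbol attached to $d'$) and the action on indices ($dr\equiv r\bmod{\,2\mathcal M}$) become trivial, while the standard theta multiplier survives; membership in $S_{k-j/2}(\Gamma(N'))$ must then be read with respect to that multiplier system. So your even-$j$ case is sound, but for odd $j$ you must carry the theta multiplier along rather than claim it disappears. Relatedly, the computation you defer as ``the main obstacle'' --- that $\rho$ becomes scalar on $\Gamma(N')$ with the stated parity-dependent $d'$ --- is the entire content of the proposition; pointing to \cite{GS,TS} in general form leaves it open, whereas the paper closes it by citing Ajouz's explicit $\Gamma_0(4d')$/$\Gamma_0(d')$ formulas, from which the triviality of the character, the identity $dr\equiv r$, and the residual theta multiplier on $\Gamma(N')=\Gamma(4Nd')$ can all be read off directly.
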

\begin{proof}%[Proof of Proposition \ref{prop:hmutrans}]
The proposition follows by using the following transformation properties for the theta series 
\cite[\S 2.3, Proposition 2.3.1]{AA} together with the fact that $\phi \in J^{\rm cusp}_{k,M}(N,\chi)$. 
For $j$ odd, we have
\begin{equation*}\label{eq:thetatransodd}
\begin{split}
\theta_{\mathcal{M},r}\left(\frac{a\tau+b}{c\tau+d}, \frac{z}{c\tau+d}\right)=\left(\frac{c}{d}\right)\left(\frac{-4}{d}\right)^{-\frac{1}{2}}\left(\frac{(-1)^{j/2} d'}{d}\right) & e\left(bd\mathcal{M}[r^t]+\mathcal{M}[z] \frac{c}{c\tau+d}\right)\\
&(c\tau+d)^{j/2} \theta_{\mathcal{M},dr}(\tau,z), \ \ \begin{pmatrix} a&b\\c&d \end{pmatrix} \in \Gamma_0(4d').
\end{split}
\end{equation*} 
For $j$ even, we have
\begin{equation*}\label{eq:thetatranseven}
\theta_{\mathcal{M},r}\left(\frac{a\tau+b}{c\tau+d}, \frac{z}{c\tau+d}\right)=\left(\frac{(-1)^{j/2} d'}{d}\right)  e\left(bd\mathcal{M}[r^t]+\mathcal{M}[z] \frac{c}{c\tau+d}\right)
(c\tau+d)^{j/2} \theta_{\mathcal{M},d r}(\tau,z), 
\end{equation*}
for any matrix $ \begin{pmatrix} a&b\\c&d \end{pmatrix} \in \Gamma_0(d')$. 
\end{proof}

\section{Statements of the intermediate results}
To prove Theorems \ref{thm:1} and \ref{thm:3} we need to work on theta decomposition of Jacobi cusp forms and Fourier-Jacobi decomposition of Siegel cusp forms of arbitrary degree. More precisely, we need to prove the following three propositions. The first one gives non-vanishing of the theta components and generalizes 
\cite[Proposition 1.1]{M} to any degree whereas the next two are about  
non-vanishing of certain Fourier-Jacobi coefficients. These results may have some independent interest as well.  
We state our results below and prove them in \S 5 after proving some lemmas from the theory of quadratic forms in the next section. 

\begin{prop}\label{prop:main}
Let $g, k, N$ be positive integers. Assume that $N$ is odd. Let $\chi$ be any Dirichlet character modulo $N$ such that $\chi(-1)=(-1)^k$. Let $p$ be an odd prime such that $\gcd(p, N)=1$. Let 
$\phi \in J^{g, {\rm cusp}}_{k, p} (N, \chi)$ be a non-zero Jacobi cusp form. Then all the theta components 
$\phi_\mu(\tau), \mu \in \mathcal{I}_{g,1} (=\mathbb{Z}^{g,1}/2p\mathbb{Z}^{g,1})$, in the theta decomposition
\begin{equation}\label{eq:propmain}
\phi(\tau, z) = \sum_{\mu \in \mathcal{I}_{g,1}} \phi_{\mu} (\tau) \theta_{p, \mu} (\tau,z) 
\end{equation}
of $\phi$ $($given by \eqref{f theta decomp}$)$ are non-zero. 
\end{prop}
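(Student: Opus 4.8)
The plan is to exploit the transformation behaviour of the theta series $\theta_{p,\mu}$ under $Sp_g(\mathbb{Z})$ together with the invariance of $\phi$ under $\Gamma_0^g(N)$, and to recast the claim as the statement that the vanishing locus $Z:=\{\mu\in\mathcal{I}_{g,1}:\phi_\mu\equiv 0\}$ is empty. First I would record the vector-valued transformation law inherited from \eqref{f theta decomp}: for every $\gamma=\begin{pmatrix}A&B\\C&D\end{pmatrix}\in\Gamma_0^g(N)$ the column vector $h(\tau)=(\phi_\mu(\tau))_{\mu\in\mathcal{I}_{g,1}}$ satisfies, schematically, $h(\gamma\tau)=\chi(\gamma)\det(C\tau+D)^{k-1/2}\rho(\gamma)h(\tau)$, with $\rho$ the theta (Weil) representation of \S2.3. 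Since $\phi\neq 0$ forces $h\not\equiv 0$, the span $V_h=\mathrm{Span}_{\mathbb{C}}\{h(\tau):\tau\in\mathbb{H}_g\}$ is nonzero, and the displayed law shows $\rho(\gamma)V_h\subseteq V_h$ for all $\gamma\in\Gamma_0^g(N)$. As $h(\tau)$ lies in the coordinate subspace $\{v:v_\mu=0\ \forall\mu\in Z\}$ for every $\tau$, so does $V_h$; hence it suffices to prove that $\rho(\Gamma_0^g(N))$ admits no nonzero invariant subspace contained in a proper coordinate subspace, which forces $Z=\emptyset$.

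The engine for this is a supply of explicit elements of $\Gamma_0^g(N)$ whose $\rho$-image I can compute. The block matrices $\begin{pmatrix}U&0\\0&(U^t)^{-1}\end{pmatrix}$ with $U\in GL_g(\mathbb{Z})$ lie in $\Gamma_0^g(N)$ (here $C=0$) and permute the indices, $\theta_{p,\mu}\mapsto\theta_{p,U\mu}$ up to a root of unity, so $\phi_\mu\not\equiv 0\iff\phi_{U\mu}\not\equiv 0$. Since $N$ is odd and $\gcd(p,N)=1$, so $\gcd(2p,N)=1$, strong approximation for $Sp_g$ together with the Chinese Remainder Theorem produces, for any unit $a\in(\mathbb{Z}/2p\mathbb{Z})^{\times}$, an element $\gamma\in Sp_g(\mathbb{Z})$ with $\gamma\equiv\mathrm{diag}(aI_g,a^{-1}I_g)\pmod{2p}$ and $\gamma\equiv I_{2g}\pmod N$; this $\gamma$ lies in $\Gamma_0^g(N)$ with $\chi(\gamma)=1$ and realizes the scaling $\mu\mapsto a\mu$, giving $\phi_\mu\not\equiv 0\iff\phi_{a\mu}\not\equiv 0$. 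The lemmas of \S4 on quadratic forms are exactly what pins down the resulting orbits on $\mathcal{I}_{g,1}$: the joint action of $GL_g(\mathbb{Z})$ and unit scaling is transitive on the classes with a fixed invariant (essentially, whether $p\mid\mu$), so $Z$ is reduced to a union of a few explicit orbits.

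The final and hardest step is to rule out a nonempty $Z$, that is, to link the classes with $p\mid\mu$ to the generic ones. For this I would take, again by CRT, an element $\gamma_0\in\Gamma_0^g(N)$ with $\gamma_0\equiv\begin{pmatrix}0&-I_g\\I_g&0\end{pmatrix}\pmod{2p}$ and $\gamma_0\equiv I_{2g}\pmod N$; then $\rho(\gamma_0)$ is the finite Fourier transform attached to the quadratic module $\mathbb{Z}^g/2p\mathbb{Z}^g$, whose entries are normalized Gauss sums and are therefore all nonzero. Since $\rho(\gamma_0)$ spreads every coordinate vector across all indices, no proper coordinate subspace can be stable under the group generated by these three families, so the nonzero invariant subspace $V_h$ cannot sit inside one; hence $Z=\emptyset$ and every $\phi_\mu$ is non-zero. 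I expect the main obstacle to be precisely this non-degeneracy input. Concretely it requires (i) constructing the symplectic elements above with the prescribed reductions modulo $2p$ and modulo $N$ simultaneously and controlling their automorphy multipliers, (ii) computing $\rho$ on them, including the nonvanishing of the Gauss-sum entries of the Fourier transform, where the hypotheses that $p$ is an odd prime, that $N$ is odd with $\gcd(p,N)=1$, and that $\chi(-1)=(-1)^k$ all enter, and (iii) using the \S4 quadratic-form lemmas to control the orbit structure so that the Fourier transform genuinely connects the remaining classes.
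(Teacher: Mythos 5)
Your reduction is to a statement that is false, so the argument cannot be completed along the proposed lines. You claim it suffices to show that $\rho(\Gamma_0^g(N))$ admits no nonzero invariant subspace contained in a proper coordinate subspace of $\mathbb{C}[\mathcal{I}_{g,1}]$. But such subspaces exist. Indeed $-I_{2g}\in\Gamma_0^g(N)$, and $\rho(-I_{2g})$ is (up to a nonzero scalar) the permutation $e_\mu\mapsto e_{-\mu}$ of the standard basis vectors; since $-I_{2g}$ is central in $Sp_g(\mathbb{Z})$, each $\rho(\gamma)$ normalizes this involution projectively, and its two eigenspaces --- the \emph{even} vectors ($v_{-\mu}=v_\mu$) and the \emph{odd} vectors ($v_{-\mu}=-v_\mu$) --- cannot be interchanged because their dimensions $\left((2p)^g\pm 2^g\right)/2$ differ. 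Hence both are invariant under the projective action of all of $\Gamma_0^g(N)$. The odd part is nonzero (it contains $e_\mu-e_{-\mu}$ whenever $\mu\neq-\mu$), and every odd vector vanishes at each of the $2^g$ classes $\mu$ with $2\mu\equiv 0\pmod{2p}$, so the odd part is a nonzero invariant subspace sitting inside a proper coordinate subspace. This is exactly the obstruction that the hypothesis $\chi(-1)=(-1)^k$ is there to remove: it yields $c_{-\mu}(T)=(-1)^{gk}\chi((-1)^g)c_\mu(T)=c_\mu(T)$, i.e.\ the theta vector of $\phi$ lies in the \emph{even} part. Your proposal never uses this hypothesis --- a telltale sign of the gap --- and the statement you would actually need is that no nonzero invariant subspace of the even part lies in a proper coordinate subspace, which is essentially an irreducibility assertion for the even Weil representation and requires precisely the Gauss-sum computations you hoped to bypass. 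There is a second, independent gap at the end: showing that no proper coordinate subspace is itself stable under $\rho(\gamma_0)$ (because the finite Fourier transform has all entries nonzero) does not rule out an invariant subspace that is merely \emph{contained} in a coordinate subspace without being one; the minimal coordinate subspace containing $V_h$ is stable under the permutation-type elements but there is no reason for it to be $\rho(\gamma_0)$-stable. (A smaller issue: $\rho(\gamma)$ is not determined, even projectively, by $\gamma\bmod 2p$; the discriminant module $\mathbb{Z}/2p\mathbb{Z}$ with $Q(x)=x^2/4p$ has level $4p$, and the index has odd rank, so your CRT elements must be prescribed at least modulo $4p$ with the metaplectic sign tracked.)

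For contrast, the paper never formulates an invariant-subspace problem. Assuming $\phi_\nu\equiv 0$, it writes the resulting linear relations among Fourier coefficients and exploits the arithmetic fact that, for fixed $T\in\mathcal{J}_g$, the only classes $\mu'\in\mathcal{I}_{g,1}$ with $T+\mu'{\mu'}^t\in 4p\,\mathcal{J}_g$ are $\mu'=\pm\mu$; together with the parity relation above (this is where $\chi(-1)=(-1)^k$ enters), each relation collapses to $\left(\rho_{\mu,\nu}(M)+\rho_{-\mu,\nu}(M)\right)c_\mu(T)=0$ for every $M\in\Gamma_0^g(N)$. It then produces explicit matrices $M=M_1M_2^lM_1M_2^mM_1\in\Gamma_0^g(N)$, with $l\equiv m\equiv 0\pmod 4$, $lm\equiv 1\pmod N$ and $p\nmid(l-1)m$, and evaluates $\rho_{\mu,\nu}(M)+\rho_{-\mu,\nu}(M)$ as a product of Gauss sums, verifying it is nonzero; this forces all $c_\mu(T)=0$, i.e.\ $\phi=0$, a contradiction. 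Any repair of your approach would have to incorporate both ingredients: restriction to the even part via the character hypothesis, and a nonvanishing computation of this Gauss-sum type.
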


\begin{prop}\label{prop:1}
Let $g, k, N$ be positive integers and $\chi$ be a Dirichlet character modulo $N$. Let 
$F \in S_k^{g+1, {\rm new}}(N, \chi)$ be a non-zero Siegel cusp form with Fourier-Jacobi expansion 
$($governed by \eqref{eq:FJE}$)$ 
\begin{equation}\label{eq:prop1}
F(Z) = \sum_{m \geq 1} \phi_m(\tau, z) e(m\tau'), \ Z = \begin{pmatrix} \tau&z^t\\z&\tau' \end{pmatrix},\ \tau \in \mathbb{H}_g, z \in \mathbb{C}^{g}, \tau' \in \mathbb{H}.
\end{equation}
Then there exist infinitely many odd primes $p$ such that $\phi_p \not \equiv 0$.
\end{prop}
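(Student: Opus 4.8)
The goal is to show that for a non-zero Siegel newform $F \in S_k^{g+1,\mathrm{new}}(N,\chi)$ with Fourier-Jacobi expansion \eqref{eq:prop1}, infinitely many Fourier-Jacobi coefficients $\phi_p$ (indexed by odd primes $p$) are non-zero.

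The contrapositive strategy is the natural approach. Suppose, for contradiction, that $\phi_p \equiv 0$ for all but finitely many odd primes $p$; equivalently, $\phi_m \equiv 0$ for all $m$ in some cofinite set of odd primes. The plan is to leverage Theorem \ref{thm:IK}, which tells us that a non-zero newform must have a non-zero Fourier coefficient $a_F(T)$ on some primitive matrix $T \in \mathcal{J}_{g+1}$. I would combine this with the Fourier-Jacobi unfolding \eqref{eq:fourierphim}: the coefficient $\phi_m$ records exactly those $a_F(T)$ whose bottom-right $1\times 1$ block equals $m$, i.e. $T = \begin{pmatrix} T_1 & \frac{1}{2}r \\ \frac{1}{2}r^t & m \end{pmatrix}$. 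So the vanishing of $\phi_m$ for a set of $m$ kills all Fourier coefficients on matrices with those bottom-right entries.

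First I would reduce to showing that the bottom-right entry of a primitive matrix can be taken to be an odd prime, up to $GL_{g+1}(\mathbb{Z})$-equivalence. The key observation is that $a_F$ transforms nicely under the action $T \mapsto U^t T U$ for $U \in GL_{g+1}(\mathbb{Z})$ (namely $a_F(U^t T U) = \chi(\det U)^{?} a_F(T)$, or with an appropriate character twist), and that primitivity is preserved under this action. Thus if $F$ has a non-zero coefficient on some primitive $T_0$, it has one on the whole $GL_{g+1}(\mathbb{Z})$-orbit of $T_0$. The number-theoretic heart of the argument is then: the set of values represented on the $(g+1,g+1)$-diagonal entry across the orbit of a primitive positive-definite matrix is infinite and, crucially, contains infinitely many odd primes. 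Since $T_0$ is primitive, the associated quadratic form $T_0[x]$ for $x \in \mathbb{Z}^{g+1}$ primitively represents infinitely many integers, and by a suitable primitive-representation result (for instance, the forms here are non-degenerate positive-definite of rank $g+1 \geq 3$, where one knows the represented primitive values hit every sufficiently large integer in appropriate congruence classes), these values include infinitely many odd primes coprime to $N$.

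Concretely, I would argue as follows. Pick a primitive $T_0$ with $a_F(T_0) \neq 0$, guaranteed by Theorem \ref{thm:IK}. There is a primitive vector $v \in \mathbb{Z}^{g+1}$ that we can complete to a basis (column of a matrix $U \in GL_{g+1}(\mathbb{Z})$) so that the $(g+1,g+1)$-entry of $U^t T_0 U$ equals $T_0[v]$. Choosing $v$ to run over primitive vectors, the values $T_0[v]$ range over the primitively represented integers of the quadratic form attached to $T_0$; these include infinitely many odd primes $p$ with $\gcd(p,N)=1$ and with $a_F$ on the corresponding transformed matrix equal to $\pm a_F(T_0) \neq 0$ (the character factor $\chi(\det U) = \chi(\pm 1)$ is a root of unity and hence nonzero). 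For each such $p$, the transformed matrix has bottom-right entry $p$, so it contributes to $\phi_p$ via \eqref{eq:fourierphim}, forcing $\phi_p \not\equiv 0$. This contradicts the assumption that $\phi_p \equiv 0$ for all but finitely many $p$, completing the proof.

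The main obstacle I anticipate is the number-theoretic input that a primitive positive-definite quadratic form of rank $g+1 \geq 3$ primitively represents infinitely many primes coprime to $N$. For rank $\geq 4$ this is classical and uniform; for ternary forms ($g=2$) one must be more careful about local conditions and the possibility of excluded congruence classes (spinor exceptions), but since we only need \emph{infinitely many} primes rather than all large ones, a cleaner representation lemma — likely among the quadratic-form lemmas promised for \S 4 — should suffice. I would isolate this as a self-contained lemma to keep the modular-forms part of the argument transparent.
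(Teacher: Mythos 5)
Your approach is essentially the paper's own: invoke Theorem \ref{thm:IK} to produce a primitive $T_0$ with $a_F(T_0)\neq 0$, represent an odd prime $p$ by the quadratic form attached to $T_0$, complete the representing vector to a matrix in $SL_{g+1}(\mathbb{Z})$ whose conjugation action puts $p$ in the bottom-right corner, and use invariance of Fourier coefficients under this unimodular action together with \eqref{eq:fourierphim} to conclude $\phi_p\not\equiv 0$. This is precisely the paper's chain Lemma \ref{lemma:1}, Lemma \ref{lemma:2}, Lemma \ref{lemma:3}. Two of your worries are non-issues: the contradiction framing is superfluous (the argument is direct), and primitivity of the representing vector is automatic, since if $Q(x)=p$ is prime and $x=dy$ then $d^2\mid p$, forcing $d=1$; also, restricting to $SL_{g+1}(\mathbb{Z})$ (as the paper does) makes the character factor identically $1$, so no twist is needed.

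There is, however, one genuine gap: your number-theoretic input is formulated only for rank $g+1\geq 3$, whereas the proposition allows $g=1$, i.e.\ binary forms --- and this is exactly the case invoked in the proof of Theorem \ref{thm:3}. For binary forms your justification (``the represented values hit every sufficiently large integer in appropriate congruence classes'') is false: a positive definite binary quadratic form represents a density-zero set of integers, so no such local-to-global statement can hold. What is needed, and what the paper's Lemma \ref{lemma:2} uses in this case, is Weber's classical theorem \cite{HW} that a primitive positive definite binary quadratic form represents infinitely many primes. For rank $\geq 3$ the paper proceeds essentially as you anticipate: by Duke \cite{WD} and Iwaniec \cite{I} it suffices to solve $Q(x)\equiv p \pmod{2^7(\det A)^3}$, which follows from Lemma \ref{lemma:1} combined with Dirichlet's theorem on primes in arithmetic progressions. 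So your argument is complete for $g\geq 2$ but, as written, does not prove the proposition in the range where the paper most needs it.
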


\begin{prop}\label{prop:2}
Let $g, k, N, \chi$ be as in Proposition \ref{prop:main}. Assume that $g \geq 2$. 
Let $F \in S_k^{g+1, {\rm new}}(N, \chi)$ be a non-zero Siegel cusp form with Fourier-Jacobi expansion 
$($governed by \eqref{eq:FJE}$)$ 
\begin{equation}\label{eq:prop2}
F(Z)=\sum_{\mathcal{M} \in \mathcal{J}_g} \psi_{\mathcal{M}} (\tau, z) e(\mathcal{M}\tau'), \ Z = \begin{pmatrix} \tau&z^t\\z&\tau' \end{pmatrix}, \ \tau \in \mathbb{H}, z \in \mathbb{C}^{g,1}, \tau' \in \mathbb{H}_g.
\end{equation}
Then there exist infinitely many primitive, symmetric, half-integral, positive definite 
$g \times g$ matrices $\mathcal{M}$ such that $\psi_{\mathcal{M}} \neq 0$.
\end{prop}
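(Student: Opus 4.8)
The plan is to argue by contradiction. Suppose that $\psi_{\mathcal{M}} \equiv 0$ for all but finitely many primitive $\mathcal{M}$; I want to derive that $F$ has vanishing Fourier coefficients on all primitive matrices $T \in \mathcal{J}_{g+1}$, which by Theorem \ref{thm:IK} forces $F = 0$, a contradiction. The key observation is the following combinatorial fact about primitivity: if $T = \begin{pmatrix} \tau & z^t/2 \\ z/2 & \mathcal{M} \end{pmatrix} \in \mathcal{J}_{g+1}$ is primitive, then I expect the lower-right $g \times g$ block $\mathcal{M}$ to be ``often'' primitive as well, or at least primitivity of $T$ should be controllable in terms of the blocks. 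So the first step is to make precise the relationship between primitivity of the full matrix $T$ and primitivity of $\mathcal{M}$, using the lemmas from the theory of quadratic forms promised in \S 4.

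Concretely, I would proceed as follows. First, observe that the Fourier expansion \eqref{eq:fourierphim} of $\psi_{\mathcal{M}}$ collects exactly those Fourier coefficients $a_F(T)$ of $F$ whose lower-right block equals $\mathcal{M}$. Hence $\psi_{\mathcal{M}} \equiv 0$ is equivalent to the vanishing of $a_F(T)$ for every $T \in \mathcal{J}_{g+1}$ with lower-right block $\mathcal{M}$. If $\psi_{\mathcal{M}} \equiv 0$ for all primitive $\mathcal{M}$ outside a finite set, then in particular $a_F(T) = 0$ for every primitive $T$ whose lower-right block is a primitive $\mathcal{M}$ lying outside that finite set. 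The second step, which is the crux, is to show that every primitive $T \in \mathcal{J}_{g+1}$ is $GL_{g+1}(\mathbb{Z})$-equivalent to one whose lower-right $g \times g$ block is primitive and avoids any prescribed finite set of matrices; since $a_F(T)$ depends only on the $GL_{g+1}(\mathbb{Z})$-class of $T$ (up to the character), this would give vanishing on all primitive $T$.

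For that second step I would use the action of $GL_{g+1}(\mathbb{Z})$ on $T$: conjugating $T$ by a unimodular matrix of the block form $\begin{pmatrix} 1 & 0 \\ u & U \end{pmatrix}$ replaces $\mathcal{M}$ by $\mathcal{M}[U] + (\text{shifts from } u, z)$, and by a suitable choice I should be able to represent, through the genus or class of the quadratic form attached to $T$, infinitely many admissible primitive lower-right blocks $\mathcal{M}$. The arithmetic input here is exactly a statement that a primitive $(g+1)$-ary form represents infinitely many primitive $g$-ary forms as ``sub-blocks'' after unimodular change, avoiding finitely many exceptions --- this is where I expect the genuine difficulty to lie, and I anticipate it is precisely what the quadratic-form lemmas of \S 4 are designed to supply (in the spirit of reduction theory and the representation results used by Yamana \cite{SY} and in \cite{IK}). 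Once such representatives are produced, invariance of $a_F$ under $GL_{g+1}(\mathbb{Z})$ together with the assumed vanishing forces $a_F(T) = 0$ on a full set of primitive representatives, hence on all primitive $T$, and Theorem \ref{thm:IK} yields $F = 0$. The main obstacle, then, is the quadratic-forms step guaranteeing that one can always unimodularly move to a primitive lower-right block outside the finite exceptional set, since controlling both primitivity of the full matrix and of the block simultaneously under the $GL_{g+1}(\mathbb{Z})$-action requires care.
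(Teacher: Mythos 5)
Your reduction is logically sound as far as it goes: if every primitive $T \in \mathcal{J}_{g+1}$ were $GL_{g+1}(\mathbb{Z})$-equivalent to a matrix whose lower-right $g \times g$ block is primitive and avoids a prescribed finite set, then the assumed vanishing of all but finitely many $\psi_{\mathcal{M}}$ with $\mathcal{M}$ primitive would force $a_F(T)=0$ on all primitive $T$, and Theorem \ref{thm:IK} would give $F=0$. But the crux step you defer to ``the quadratic-form lemmas of \S 4'' is a genuine gap: those lemmas do not contain it, and it is not a routine consequence of them. Lemma \ref{lemma:1} and Lemma \ref{lemma:2} are about representing a single integer (coprime to $N$, or prime), and Lemma \ref{lemma:3} only normalizes the $1\times 1$ corner entry of $T$ to be a prime; none of them controls primitivity of a $g \times g$ block under unimodular equivalence. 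What you need is a statement of the form: a primitive positive definite form in $g+1 \geq 3$ variables cannot vanish identically modulo some prime $q$ on every corank-one direct summand of $\mathbb{Z}^{g+1}$. This is in fact true for $g \geq 2$ (if $Q \bmod q$ vanishes on a $g$-dimensional subspace, polarization forces the rank of $T \bmod q$ to be at most $2$, so only finitely many primes $q$ are dangerous, and for those one picks, via the Chinese remainder theorem, a hyperplane $L = \ker(a^t)$ whose reduction mod each bad $q$ is not totally singular; avoiding the finite exceptional set of block matrices is then handled by moving the Gram matrix of $Q|_L$ inside its infinite $GL_g(\mathbb{Z})$-orbit), but this argument is nowhere in the paper and nowhere in your proposal, so as written the proof is incomplete at exactly the point you yourself flag as the main difficulty.

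For contrast, the paper avoids this question entirely and argues constructively, not by contradiction. It first takes the \emph{other} Fourier-Jacobi expansion \eqref{eq:prop1}, with scalar index, and uses Proposition \ref{prop:1} (which rests on Theorem \ref{thm:IK} and Lemma \ref{lemma:3}) to get infinitely many odd primes $p$ coprime to $N$ with $\phi_p \neq 0$. Then Proposition \ref{prop:main} --- and this is where the hypotheses $N$ odd and $\chi(-1)=(-1)^k$ are actually used --- guarantees that \emph{every} theta component of $\phi_p$ is non-zero; taking the component indexed by the all-ones vector $\mu_1$ produces a non-vanishing coefficient $a_F\left(\begin{pmatrix} \mathcal{T} & \mu_1/2 \\ \mu_1^t/2 & p \end{pmatrix}\right)$. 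The lower-right $g \times g$ block of this matrix has off-diagonal entries equal to $1/2$, hence is automatically primitive, and the distinct corner entries $p$ give infinitely many distinct such blocks $\mathcal{M}$ with $\psi_{\mathcal{M}} \neq 0$. Note that your route, if the missing quadratic-forms lemma were supplied, would actually prove a stronger statement (it never uses $N$ odd or the character parity); but to be a proof, that lemma must be established rather than presumed.
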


\section{Some key lemmas from quadratic forms}
Let $Q(x)=\sum_{1\leq i \leq j\leq n}c_{ij}x_i x_j$ be a quadratic form in $n (\geq 2)$ variables with integral coefficients. Write $Q(X)=\frac{1}{2} x^t A x$, where $x$ is the column vector $(x_j)$ and $A$ is the 
$n \times n$ symmetric matrix $(a_{ij})$ with entries $a_{ii}=2c_{ii}$ and $a_{ij}=c_{ij}$ for $i<j$. 
We say that $Q$ is {\it primitive} if the gcd of all the coefficients $c_{ij}$ is $1$, in other words the half-integral matrix $\frac{1}{2} A$ is primitive. We have the following lemma.
\begin{lemma}\label{lemma:1}
Let $Q$ be a primitive quadratic form as above and $N$ be a positive integer. Then $Q$ represents an integer $m\ ($over $\mathbb{Z})$ which is relatively prime to $N$.
\end{lemma}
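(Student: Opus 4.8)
The plan is to localize the problem at each prime dividing $N$ and then patch the local solutions together using the Chinese Remainder Theorem. It suffices to produce, for every prime $p \mid N$, a vector $x^{(p)} \in \mathbb{Z}^n$ with $Q(x^{(p)}) \not\equiv 0 \pmod{p}$. Granting this, one chooses $x \in \mathbb{Z}^n$ with $x \equiv x^{(p)} \pmod{p}$ coordinatewise for every prime $p \mid N$; such an $x$ exists because distinct primes are coprime, so the congruences can be solved separately in each coordinate. Since $Q$ has integer coefficients, $Q(x) \equiv Q(x^{(p)}) \not\equiv 0 \pmod{p}$ for each $p \mid N$, and hence $m := Q(x)$ is relatively prime to $N$, which is exactly what we want.

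For the local step, fix a prime $p \mid N$. Primitivity of $Q$ means the $\gcd$ of all the coefficients $c_{ij}$ is $1$, so $p$ cannot divide every $c_{ij}$. I would then distinguish two cases according to where a coefficient coprime to $p$ sits. If some diagonal coefficient satisfies $c_{\ell\ell} \not\equiv 0 \pmod{p}$, take $x^{(p)} = e_\ell$, the $\ell$-th standard basis vector; every monomial of $Q$ involving an index other than $\ell$ vanishes on $e_\ell$, so $Q(e_\ell) = c_{\ell\ell} \not\equiv 0 \pmod{p}$. Otherwise all diagonal coefficients are divisible by $p$, and then some off-diagonal coefficient $c_{ij}$ with $i < j$ must satisfy $p \nmid c_{ij}$; taking $x^{(p)} = e_i + e_j$, only the monomials in the variables $x_i, x_j$ survive, giving $Q(e_i + e_j) = c_{ii} + c_{jj} + c_{ij} \equiv c_{ij} \not\equiv 0 \pmod{p}$, since the two diagonal terms vanish modulo $p$. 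In both cases $Q(x^{(p)}) \not\equiv 0 \pmod{p}$, completing the local construction.

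There is no substantial obstacle here; the argument is elementary and uniform in $p$, and in particular it works for $p = 2$. The only point worth flagging is the familiar subtlety that over $\mathbb{F}_p$ a nonzero polynomial need not define a nonzero function. I sidestep this entirely: by evaluating only on the sparse test vectors $e_\ell$ and $e_i + e_j$, all cross terms coming from the other coordinates drop out and $Q$ collapses to a single coefficient, which primitivity guarantees to be a unit modulo $p$. The hypothesis $n \geq 2$ enters only to ensure that off-diagonal positions exist for the second case; the whole construction rests solely on the fact that primitivity forbids $p$ from dividing all the coefficients at once.
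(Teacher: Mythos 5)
Your proof is correct and follows essentially the same route as the paper: for each prime $p \mid N$ produce a vector on which $Q$ is nonzero modulo $p$, then patch the choices together with the Chinese Remainder Theorem. The only difference is that your explicit test vectors $e_\ell$ and $e_i+e_j$ actually justify the local step that the paper merely asserts (that a primitive form cannot vanish identically modulo $p$, the one point needing care at $p=2$), so your write-up is, if anything, slightly more complete.
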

\begin{proof}
Let $Q(x)=\frac{1}{2}x^{t}Ax$ be a primitive quadratic form in $n$ variables.
Without loss of generality we assume that $N$ is squarefree, that is,
$N = p_{1}p_{2} \dots p_{k}$ for distinct primes $p_{i}$.
Since $Q$ is primitive, $Q_i(x):=Q(x) \pmod{p_i}$ is a non-zero quadratic form for each $i=1,2,\dots k$.
For any such $i$, let $\tilde{x}_i=(\tilde{x}_{i1},\tilde{x}_{i2},\dots,\tilde{x}_{in}) \in \mathbb{Z}^{n}$ be such that    
$Q_{i}(\tilde{x}_{i}) \not\equiv 0 \pmod{p_{i}}$. 
By using Chinese remainder theorem we get $y_1,y_2, \dots y_n$
unique modulo $N$ such that 
$$y_j \equiv \tilde{x}_{ij} \pmod {p_i}, i=1,2, \dots n, j=1,2,\dots k.$$
Then $y=(y_1,y_2,\dots y_n) \in \mathbb{Z}^n$ such that $Q(y)\not\equiv 0 \pmod {p_i}$ for any 
$i=1,2 \dots k$ and hence $Q(y)$ is co-prime to $N$.
\end{proof}

\begin{lemma}\label{lemma:2}
Let $Q$ be a primitive, positive definite quadratic form with the same notations as above. Then $Q$ represents infinitely many odd primes over $\mathbb{Z}$.
\end{lemma}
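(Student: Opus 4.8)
The plan is to prove that a primitive, positive definite integral quadratic form $Q$ in $n \geq 2$ variables represents infinitely many odd primes. The key classical input will be the theory of representation of primes by quadratic forms together with a reduction to a \emph{binary} situation where Dirichlet-type density results apply. First I would reduce to the case $n=2$: given a primitive positive definite $Q$ in $n$ variables, I would fix all but two of the variables (or restrict to a suitable two-dimensional rational sublattice) so as to obtain a binary form that inherits primitivity and positive definiteness. The subtlety here is ensuring the restricted binary form is still primitive; one way to guarantee this is to first use Lemma \ref{lemma:1} to locate a vector on which $Q$ takes a value coprime to a chosen modulus, and build the sublattice around it so that the gcd of the restricted coefficients is $1$.

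Once reduced to a primitive positive definite binary quadratic form $Q(x,y) = ax^2 + bxy + cy^2$ of discriminant $D = b^2 - 4ac < 0$, I would invoke the classical fact (a consequence of Dirichlet's theorem on primes in arithmetic progressions applied within the genus/class group framework of binary forms) that the set of primes represented by a given primitive positive definite binary form has positive density among all primes. More precisely, a prime $p \nmid D$ is represented by \emph{some} form in the principal genus according to congruence conditions on $p$ modulo $D$, and by Chebotarev/Dirichlet the forms in a fixed genus represent a positive-density set of primes. Since a positive-density set of primes is in particular infinite, and since I may discard the finitely many even primes and the finitely many primes dividing $D$, this yields infinitely many \emph{odd} primes represented by $Q$.

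An alternative and perhaps cleaner route avoiding genus theory: use the fact that the theta series (or Epstein zeta function / Dirichlet series $\sum_{m} r_Q(m) m^{-s}$ where $r_Q(m)$ counts representations) has a pole structure forcing $Q$ to represent a positive proportion of integers, combined with a sieve or an appeal to the fact that a primitive form cannot have all represented integers divisible by a fixed prime (which is exactly Lemma \ref{lemma:1}). However, representing infinitely many \emph{primes} genuinely requires more than representing infinitely many integers, so I expect the honest proof to route through the binary reduction and the arithmetic of the class group.

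The hard part will be the reduction step and the primitivity bookkeeping, rather than the prime-representation theorem itself, which is standard. Specifically, I must ensure that after restricting to two variables the resulting binary form remains primitive (not merely that $Q$ globally represents something coprime to $N$), since an imprimitive binary form only represents primes dividing its content finitely often. I would handle this by choosing the sublattice adapted to a vector $v$ with $Q(v)$ odd and coprime to the discriminant data, using Lemma \ref{lemma:1} with a suitable modulus, so that the binary form has a coefficient equal to $Q(v)$ coprime to the relevant primes, which forces primitivity. The remaining verification that infinitely many of the represented primes are odd is immediate, since at most one represented prime is even.
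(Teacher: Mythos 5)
Your approach is sound, but it takes a genuinely different route from the paper for $n \geq 3$. The paper splits into two cases: for $n = 2$ it invokes exactly the classical binary input you rely on (Weber's theorem \cite{HW}); but for $n \geq 3$ it does \emph{not} reduce to a binary form. Instead it appeals to the analytic local-global results of Duke \cite{WD} and Iwaniec \cite{I}, which reduce representability of a sufficiently large prime $p$ to solvability of the congruence $Q(x) \equiv p \pmod{2^7(\det A)^3}$; Lemma \ref{lemma:1} then produces an integer $m$ represented by $Q$ and coprime to that modulus, and Dirichlet's theorem applied to the progression $m + l\cdot 2^7(\det A)^3$ finishes the proof. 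Your reduction-to-binary route buys economy of input: it needs only the 1882 theorem on primitive positive definite binary forms, not the much deeper analytic machinery behind the local-global principle for ternary forms. What the paper's route buys is that it sidesteps the sublattice primitivity bookkeeping entirely, since local solvability requires only one represented residue class, which is precisely what Lemma \ref{lemma:1} supplies.

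Two points in your sketch need repair, though both are fixable with tools you already have. First, your primitivity argument is circular as stated: you propose choosing $v$ with $Q(v)$ coprime to ``the discriminant data,'' but the binary discriminant is not defined until the sublattice is chosen, and in any case one coefficient being coprime to a fixed modulus does not force primitivity --- the content of the restricted form always divides $Q(v)$, so the danger comes from the primes dividing $Q(v)$ itself. The correct bookkeeping is: take $v = e_1$, so $Q(v) = c_{11} > 0$; if $Q(v) = 1$ any independent $w$ works, and otherwise apply Lemma \ref{lemma:1} with modulus $N = Q(v)$ to obtain $w$ with $\gcd\bigl(Q(w), Q(v)\bigr) = 1$ (positive definiteness forces such a $w$ to be linearly independent of $v$, since $w = tv$ would give $Q(w) = t^2 Q(v)$). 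Then the content of $Q(xv + yw) = Q(v)x^2 + B(v,w)xy + Q(w)y^2$ divides $\gcd\bigl(Q(v), Q(w)\bigr) = 1$, so the binary form is primitive and positive definite, and every prime it represents is represented by $Q$. Second, your justification of the binary theorem is slightly off: genus theory plus Dirichlet only shows that \emph{some} form in the correct genus represents infinitely many primes; to pin down the specific class one needs Chebotarev for the ring class field of the (possibly non-fundamental) discriminant $D$, or one can simply cite Weber \cite{HW}, as the paper does.
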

\begin{proof}
In the case $n=2$, we claim to the classical result of Weber \cite{HW} to conclude the lemma. 
Suppose $n \geq 3$.
In view of \cite[Theorem 1]{WD} and \cite[Corollary 11.3]{I} it is sufficient to show that there exists infinitely many odd primes $p$ such that for each $p$ the congruence $Q(x)\equiv p \pmod{2^7(\det A)^3}$ is solvable over    
$\mathbb{Z}$. From Lemma \ref{lemma:1} we get that $Q$ represents an integer $m$ co-prime to $2^7(\det A)^3$. From Dirichlet theorem for primes in arithmetic progression we get infinitely many odd primes $p$ in the arithmetic progression $m+l(2^7(\det A)^3), l=0,1,2,\dots$. Since all these $p$ are congruent to $m$ modulo $2^7(\det A)^3$, we get the required congruence.      
\end{proof}
		
\begin{lemma}\label{lemma:3}
Let $A= \begin{pmatrix} T_{1} & r/2 \\ r^t/2 & m \end{pmatrix}$ be a symmetric, half-integral, positive definite, primitive $n \times n \ (n \geq 2)$ matrix. Then there exist infinitely many odd primes $p$ such that for each such prime $p$ there exists a matrix $A_p \in SL_n(\mathbb Z)$ with the property that
$A'=A_{p}^t A A_p=\begin{pmatrix} T'_{1} & r'/2 \\ {r'}^{t}/2 & p \end{pmatrix}$.
\end{lemma}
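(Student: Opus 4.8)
The plan is to read off the bottom-right entry of the congruent matrix $A'=A_p^t A A_p$ as the value of the integral quadratic form attached to $A$ at the last column of $A_p$, and then to use Lemma \ref{lemma:2} to force that value to be an odd prime.

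First I would set $Q(x):=A[x]=x^t A x$, the integral quadratic form associated to the half-integral matrix $A$. Since $A$ is positive definite and primitive (content one), $Q$ is a primitive, positive definite integral quadratic form in $n$ variables. For any $A_p\in SL_n(\mathbb{Z})$ with last column $v=A_p e_n$, the $(n,n)$ entry of $A'=A_p^t A A_p$ is $e_n^t A_p^t A A_p e_n = v^t A v = Q(v)$. Thus the task reduces to producing, for infinitely many odd primes $p$, a vector $v\in\mathbb{Z}^n$ with $Q(v)=p$ that occurs as the last column of some matrix in $SL_n(\mathbb{Z})$.

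Next, by Lemma \ref{lemma:2} the form $Q$ represents infinitely many odd primes over $\mathbb{Z}$; fix one such prime $p$ together with a vector $v$ satisfying $Q(v)=p$. The key observation is that any such $v$ is automatically primitive: if the $\gcd$ of the entries of $v$ were $d>1$, then writing $v=dw$ would give $Q(v)=d^2 Q(w)$, forcing $d^2\mid p$, which is impossible for a prime $p$. A primitive vector extends to a $\mathbb{Z}$-basis of $\mathbb{Z}^n$, so there is $M\in GL_n(\mathbb{Z})$ having $v$ as its last column; negating one of the other columns if necessary, I may assume $\det M=1$, and then set $A_p:=M$. With this choice the $(n,n)$ entry of $A'=A_p^t A A_p$ equals $Q(v)=p$.

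Finally I would check that $A'$ automatically has the asserted block shape. Congruence by an integral matrix sends half-integral matrices to half-integral matrices: the diagonal entries of $2A'=A_p^t(2A)A_p$ are $2\,Q(s_i)$ for the columns $s_i$ of $A_p$, hence even, so $A'$ is symmetric and half-integral (and it remains positive definite since $A_p$ is invertible). Writing the last column of $A'$ as $(r'^t/2,\,p)^t$ with $r'\in\mathbb{Z}^{n-1}$ and its top-left block as $T_1'$ yields exactly the required form $\begin{pmatrix} T_1' & r'/2 \\ r'^t/2 & p \end{pmatrix}$. The real content here is the appeal to Lemma \ref{lemma:2}; once that is in hand there is no serious obstacle, the only point to get right being the primitivity of the representing vector $v$, which is what permits the completion to $SL_n(\mathbb{Z})$ and which falls out cleanly from $p$ being prime.
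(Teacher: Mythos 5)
Your proof is correct and follows essentially the same route as the paper's: apply Lemma \ref{lemma:2} to represent an odd prime $p$ by $Q(x)=x^tAx$, note that any vector representing a prime must be primitive, and extend it to a matrix in $SL_n(\mathbb{Z})$ whose congruence action places $p$ in the $(n,n)$ entry. Your additional verifications (the $d^2\mid p$ argument for primitivity and the half-integrality of $A'$) are points the paper leaves implicit, but the underlying argument is identical.
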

\begin{proof}
From Lemma \ref{lemma:2} we know that the quadratic form $Q(x)=x^tAx$ represents infinitely many odd primes $p$. For each such prime $p$ let $x_p=(x_{p1},x_{p2}, \dots x_{pn})$ be such that $Q(x_p)=p$. 
This implies that $\gcd(x_{p1},x_{p2}, \dots x_{pn})=1$ and therefore it can be extended to an 
$SL_n(\mathbb{Z})$ matrix $A_p$ having the last column $x_p$. The matrix $A_p^tAA_p$ has the required property.
\end{proof}

\section{Proof of the intermediate results}

\begin{proof}[Proof of Proposition \ref{prop:main}]
We have the following transformation property of theta series \cite[Page 62]{AK}. For any 
$\alpha \in \mathcal{I}_{g,1}$ and $M=\big(\begin{smallmatrix}A & B\\C & D\end{smallmatrix}\big) \in Sp_{g}(\mathbb{Z})$ there are complex numbers  
$\rho_{\alpha,\beta}(M), \beta \in \mathcal{I}_{g,1}$, such that 
\begin{equation}\label{e1}
\begin{split}
\theta_{p,\alpha}&\left((A\tau + B) (C\tau + D)^{-1},w (C\tau + D)^{-1}\right)\\ 
&=\det(C\tau + D)^{1/2} e\left(p w^{t}w(C\tau + D)^{-1}C \right) \!\! \sum_{\beta \in \mathcal{I}_{g,1} } \!\!\rho_{\alpha,\beta}(M) \theta_{p,\beta}(\tau,w), \
(\tau,w) \in \mathbb{H}_g \times \mathbb{C}^g.
\end{split}
\end{equation}
We have the following transformation property of $\phi$ with respect to the triplet 
$[M,0,0], M \in \Gamma_{0}^{g}(N)$.
\begin{equation*}
\det(C\tau+D)^{-k}  e\left(-p w^{t}w(C\tau + D)^{-1}C \right) \phi(M (\tau,w)) = \chi(M) \phi(\tau, w).
\end{equation*}
Now by writing the theta decomposition \eqref{eq:propmain} of both sides and then by using \eqref{e1}, for any 
$\nu \in \mathcal{I}_{g,1}$ we have
\begin{equation*}
\det(C\tau+D)^{-k} \det(C\tau + D)^{1/2} \sum_{\mu \in \mathcal{I}_{g,1} } \rho_{\mu,\nu}(M) \phi_{\mu}(M \tau)=\chi(M) \phi_{\nu}(\tau).
\end{equation*}
Suppose $\phi_{\nu} \equiv 0$ for some $\nu \in \mathcal{I}_{g,1}$. 
Then we have 
$$
\sum_{\mu \in \mathcal{I}_{g,1} } \rho_{\mu,\nu}(M) \phi_{\mu}(\tau)=
\sum_{T \in \mathcal{J}_g} \left(\sum_{\mu \in \mathcal{I}_{g,1} \atop \mu \mu^{t}+T \in {4p} \mathcal{J}_g}
\rho_{\mu,\nu}(M)c_{\mu}(T)\right) e\left(\frac{T\tau}{4p}\right)=0.
$$ 
Therefore we have
\begin{equation*}\label{e2}
\sum_{\mu \in \mathcal{I}_{g,1} \atop \mu \mu^{t}+T \in {4p} \mathcal{J}_g} \rho_{\mu,\nu}(M) c_{\mu}(T)=0, \ \ \ \forall\ T \in \mathcal{J}_g,\ M \in \Gamma_{0}^{g}(N).
\end{equation*} 
Let $T \in \mathcal{J}_g$. If for any two $\mu, r \in \mathcal{I}_{g,1}$, $T+ \mu \mu^{t}, T+r r^t \in {4p}\mathcal{J}_g$ then $r \equiv \pm \mu \pmod{2p}$. Note that if all the entries of $\mu$ are $p$ or $2p$ then 
$\mu = -\mu$ in $\mathcal{I}_{g,1}$. Since $c_{-\mu}(T)=(-1)^{g k} \chi((-1)^g) c_\mu(T)$, under the assumption on $\chi$, we get
$$(\rho_{\mu,\nu}(M) + \rho_{-\mu,\nu}(M)) c_{\mu}(T)=0,$$
for any $T \in \mathcal{J}_g, \mu \in \mathcal{I}_{g,1}$ with $T+\mu \mu^t \in {4p}\mathcal{J}_g$, and 
$M \in \Gamma_0^g(N)$.
Thus for any such pair $T$ and $\mu$, to show $c_{\mu}(T)=0$ it is sufficient to 
find a matrix $M' \in \Gamma_{0}^{g}(N)$ such that 
\begin{equation}\label{l1}
\rho_{\mu,\nu}(M')\neq 0	
\end{equation} 
if all the entries of $\mu$ are $p$ or $2p$, and otherwise a matrix 
$M'' \in \Gamma_{0}^{g}(N)$ such that 
\begin{equation}\label{l2}
\rho_{\mu,\nu}(M'') + \rho_{-\mu,\nu}(M'') \neq 0.
\end{equation} 

Next we establish the existence of such matrices $M', M''$. 
Set $j(M; \tau) = \det (C \tau +D)$ for $M = \big(\begin{smallmatrix}A & B\\C & D\end{smallmatrix}\big) \in Sp_{g}(\mathbb{Z})$. For any two matrices $M_{1},M_{2}\in Sp_{g}(\mathbb{Z})$, by comparing the two equations obtained by using \eqref{e1} once for the product $M_1M_2$, secondly for $M_1$ and then for $M_2$, we have
\begin{equation}\label{e3}
\rho_{\alpha,\beta}(M_{1}M_{2})=\frac{j(M_{1};M_{2}\tau)^{1/2} j(M_{2};\tau)^{1/2}}{j(M_1M_2;\tau)^{1/2}} \sum_{\nu \in \mathcal{I}_{g,1} }\rho_{\alpha, \nu}(M_{1})\rho_{\nu, \beta}(M_{2}).
\end{equation}
Notice that the first term, that is, the ratio of cocycles $j(M; \tau)$ in the right hand side of the above equation is independent of $\tau$, and therefore its value can be obtained by evaluating it at any $\tau \in \mathbb{H}_{g}$.
Let us denote this ratio by $J(M_1, M_2; \tau)$.
 
Let $M_{1}=\left(\begin{array}{cc} 0 & -E\\ E & 0 \end{array}\right), M_{2}=\left(\begin{array}{cc} E & E\\ 0 & E \end{array}\right)$, where $E$ is the identity matrix of order $g$. By using the transformation properties of the theta functions \cite [Lemma 3.2]{CZ}, we have
\begin{equation}\label{e4}
\rho_{\alpha,\beta}(M_{1})= (2p)^{-g/2} \kappa(M_{1}) e\left(\frac{-\beta^{t} \alpha}{2p}\right), \ \rho_{\alpha,\beta}(M_{2}^{m})= \left\{\begin{array}{cc}
e(\frac{m\alpha^{t}\alpha}{4p}) & {\rm if} \ \alpha=\beta \\ 
0 & {\rm otherwise} \end{array} \right.,
\end{equation}
where $\kappa(M_1)$ is some $8$-th roots of unity and $m$ is any integer. By taking $\tau = i E$, we get that the ratio of cocycles $J(M_1, M_2; \tau)=1$. By using \eqref{e4} together with \eqref{e3}, for any integer $m \in \mathbb{Z}$ we have
\begin{equation*}
\rho_{\alpha,\beta}(M_{2}^{m}M_{1})=\rho_{\alpha, \alpha}(M_{2}^{m}) \rho_{\alpha,\beta}(M_{1})= (2p)^{-g/2} \kappa(M_{1})  e\left(\frac{m\alpha^{t}\alpha- 2\beta^{t}\alpha}{4p}\right).
\end{equation*}
Again by taking $\tau=i E$ one has $J(M_2^l M_1, M_2^m M_1; \tau) =1$ for any $l, m \in \mathbb{Z}$ with $m \geq 1$. Therefore by using \eqref{e3} we have 
\begin{align*}
\rho_{\alpha,\beta}(M_{2}^{l}M_{1}M_{2}^{m}M_{1}) &= \sum_{\nu \in \mathcal{I}_{g,1} } \rho_{\alpha,\nu}(M_{2}^{l}M_{1}) \rho_{\nu, \beta}(M_{2}^{m}M_{1}) \\ &= (2p)^{-g} \kappa^{2}(M_{1}) e\left(\frac{l\alpha^{t} \alpha}{4p}\right) \sum_{\nu \in \mathcal{I}_{g,1} } e\left(\frac{m \nu^{t}\nu-2\nu^{t}\alpha-2\beta^{t}\nu}{4p}\right)
\end{align*}
for all $l, m \in \mathbb{Z}$ with $m \geq 1$. Next, by taking $\tau = (i/m) E$ one has $J(M_1, M_2^l M_1M_2^m M_1; \tau) =1$ for any $l, m \in \mathbb{Z}$ with $l, m \geq 1$.
Therefore again by using \eqref{e3}, for such $l,m$ we have
\begin{dmath}\label{e5}
\rho_{\alpha,\beta}(M_{1}M_{2}^{l}M_{1}M_{2}^{m}M_{1}) = (2p)^{-3g/2} \kappa^{3}(M_{1}) \sum_{\lambda \in \mathcal{I}_{g,1} } e\left(\frac{l\lambda^{t}\lambda-2\lambda^{t}\alpha}{4p}\right) \times \\  \sum_{\nu \in \mathcal{I}_{g,1} }e\left(\frac{m\nu^{t} \nu-2\nu^{t} \lambda- 2\beta^{t}\nu}{4p}\right).
\end{dmath}
Now let us evaluate the exponential sum appearing in the above equation. 
Let us write 
$$\alpha = \left(\begin{array}{cc} \alpha_{1}  \\ \alpha_{2}  \\ \vdots \\ \alpha_{g} \end{array}\right),
\beta= \left(\begin{array}{cc} \beta_{1}  \\ \beta_{2}  \\ \vdots \\ \beta_{g} \end{array}\right), \lambda = \left(\begin{array}{cc} \lambda_{1}  \\ \lambda_{2}  \\ \vdots \\ \lambda_{g} \end{array}\right), 
\nu= \left(\begin{array}{cc} \nu_{1}  \\ \nu_{2}  \\ \vdots \\ \nu_{g} \end{array}\right).$$
Then we have
\begin{dmath}\label{e6}
\sum_{\nu \in \mathcal{I}_{g,1} }e\left(\frac{m\nu^{t} \nu-2\nu^{t} \lambda- 2\beta^{t}\nu}{4p}\right)\\ =
\sum_{\nu_{1} \pmod {2p}} e\left(\frac{m\nu_{1}^{2}- 2\nu_{1}\lambda_{1}-2\beta_{1}\nu_{1}}{4p}\right)  \ldots \sum_{\nu_{g} \pmod {2p}} e\left(\frac{m\nu_{g}^{2}- 2\nu_{g}\lambda_{g}-2\beta_{g}\nu_{g}}{4p}\right).
\end{dmath}
For any $\nu_j, j=1,2,..., g$, by using \cite[eqn (22), p.13]{M},  we get
\begin{dmath}\label{e7}
\sum_{\nu_{j} \pmod{2p}} e\left(\frac{m\nu_{j}^{2}-2\nu_{j}(\lambda_{j}+\beta_{j})}{4p}\right) \\ = \epsilon_{p}\sqrt{p} \left(\frac{m}{p}\right) i^{(\lambda_{j}+\beta_{j})^{2}p} \left(1+(-1)^{\lambda_{j}+\beta_{j}} e\left(\frac{mp}{4}\right)\right) e\left(\frac{-(\lambda_{j}+\beta_{j})^{2}}{4p}\right),
\end{dmath}
for any $m \geq 1, p \nmid m$. Here $\epsilon_p$ is $1$ or $i$, according to $p\equiv 1$ or $-1 \pmod{4}$. 
Now by combining \eqref{e7} with \eqref{e6} and then by using it in \eqref{e5} we get 
%\begin{dmath}\label{e8}
%\sum_{\nu \in \mathcal{I}_{g,1} }e\left(\frac{m\nu^{t} \nu-2\nu^{t} \lambda- 2\beta^{t}\nu}{4p}%\right)= \\ \prod_{j=1}^{g} \left(\epsilon_{p}\sqrt{p} \left(\frac{m}{p}\right) (\iota)^{(\lambda_{i}+\beta_{i})^{2}p} %\left(1+(-1)^{\lambda_{i}+\beta_{i}} e\left(\frac{mp}{4}\right)\right) e\left(\frac{-(\lambda_{i}+\beta_{i})^{2}}{4p}\right)%\right).
%\end{dmath}
%Using (\ref{e8}) equation-(\ref{e5}) becomes
\begin{dmath}\label{e9}
\rho_{\alpha,\beta}(M_{1}M_{2}^{l}M_{1}M_{2}^{m}M_{1})\\ =  (2p)^{-3g/2} \kappa^{3}(M_{1}) \sum_{\lambda \in \mathcal{I}_{g,1} } e\left(\frac{l\lambda^{t}\lambda-2\lambda^{t}\alpha}{4p}\right) \times \\ 
\hspace*{2cm} \prod_{j=1}^{g} \left(\epsilon_{p}\sqrt{p} \left(\frac{m}{p}\right) i^{(\lambda_{j}+\beta_{j})^{2}p} \left(1+(-1)^{\lambda_{j}+\beta_{j}} e\left(\frac{mp}{4}\right)\right) e\left(\frac{-(\lambda_{j}+\beta_{j})^{2}}{4p}\right)\right)\\
=(2p)^{-3g/2} p^{g/2} \epsilon_{p}^{g} \left(\frac{m}{p}\right)^{g} \kappa^{3}(M_{1})\times \\ \hspace*{2cm} \prod_{j=1}^g
\sum_{\lambda_{j} \pmod{2p}} e\left(\frac{l\lambda_{j}^{2}-2\lambda_{j}\alpha_{j}}{4p}\right) \left( i^{(\lambda_{j}+\beta_{j})^{2}p} \left(1+(-1)^{\lambda_{j}+\beta_{j}} e\left(\frac{m p}{4}\right)\right) e\left(\frac{-(\lambda_{j}+\beta_{j})^{2}}{4p}\right)\right).
\end{dmath}
For each $j \in \{1,2,..., n\}$, by using \cite[Eq. 24, 25 and 26]{M} we get 
\begin{dmath}\label{e11}
\sum_{\lambda_{j} \pmod{2p}} i^{(\lambda_{j}+\beta_{j})^{2}p} \left(1+(-1)^{\lambda_{j}+\beta_{j}} e\left(\frac{mp}{4}\right)\right)  e\left(\frac{l\lambda_{j}^{2}-2\lambda_{j}\alpha_{j}}{4p}-\frac{(\lambda_{j}+\beta_{j})^{2}}{4p}\right)= \epsilon_{p} \sqrt{p} \left(\frac{l-1}{p}\right) i^{(\alpha_{j}+\beta_{j})^{2}p} i^{\beta_{j}^{2}p} e\left(\frac{-(\alpha_{j}+\beta_{j})^{2}}{4p}\right) \left(1+(-1)^{\beta_{j}} e\left(\frac{mp}{4}\right)+ (-1)^{\alpha_{j}} e\left(\frac{lp}{4}\right) \left(1- (-1)^{\beta_{j}}e\left(\frac{mp}{4}\right)\right)\right), \ \ p \nmid (l-1).
\end{dmath}
By using \eqref{e11} with \eqref{e9}, we have
\begin{dmath*}\label{e12}
\rho_{\alpha,\beta}(M_{1}M_{2}^{l}M_{1}M_{2}^{m}M_{1}) = 2^{-3g/2} p^{-g/2} \epsilon_{p}^{2g} \left(\frac{m}{p}\right)^{g}  \left(\frac{l-1}{p}\right)^g  \kappa^{3}(M_{1}) \prod_{j=1}^{g} i^{(\alpha_{j}+\beta_{j})^{2}p} 
i^{\beta_{j}^{2}p} e\left(\frac{-(\alpha_{j}+\beta_{j})^{2}}{4p}\right) \\ \hspace*{5cm} \times \left(1+(-1)^{\beta_{j}} e\left(\frac{mp}{4}\right)+ (-1)^{\alpha_{j}} e\left(\frac{lp}{4}\right) \left(1- (-1)^{\beta_{j}}e\left(\frac{mp}{4}\right)\right)\right), \ \ p \nmid (l-1) m.
\end{dmath*}
Since $(-1)^{-\alpha_j}=(-1)^{\alpha_j}$ and $(-\alpha_j+\beta_j)^2 \equiv (\alpha_j+\beta_j)^2 \pmod{4}$, \eqref{l1} and \eqref{l2} will hold if we could choose positive integers 
$l, m$ such that $p \nmid (l-1) m, M_{1}M_{2}^{l}M_{1}M_{2}^{m}M_{1}
\in \Gamma_0^g(N)$, and
\begin{equation}\label{e12}
\left(1+(-1)^{\beta_{j}} e\left(\frac{mp}{4}\right)+ (-1)^{\alpha_{j}} e\left(\frac{lp}{4}\right) \left(1- (-1)^{\beta_{j}}
e\left(\frac{mp}{4}\right)\right)\right) \neq 0 
\end{equation}
for all $\alpha_{j}, \beta_{j} \in \mathbb{Z}/2p\mathbb{Z}$. 
As $N$ is odd and $\gcd(p,2N)=1$, we can choose integers $l\equiv m \equiv 0 \pmod{4}$ with 
$l m \equiv 1 \pmod{N}$ and $p \nmid (l-1) m$. Then 
$M_{1}M_{2}^{l}M_{1}M_{2}^{m}M_{1} \in \Gamma_{0}^{g}(N)$ and the left hand side of \eqref{e12} 
is equal to 
$$1+ (-1)^{\beta_{j}}+ (-1)^{\alpha_{j}}- (-1)^{\alpha_{j}+\beta_{j}}= \begin{cases}
2,    \hspace{0.5cm}   {\rm if} \  \beta_{j}\equiv 0\pmod{2}\\ 
(-1)^{\alpha_{j}}2, \hspace{0.2cm}   {\rm if} \ \beta_{j}\equiv 1\pmod{2}. \end{cases},$$
which are never zero for any $j \in \{1,2,\ldots, g\}$.
\end{proof}

\begin{proof}[Proof of Proposition \ref{prop:1}]
Since $F$ is non-zero, by using Theorem \ref{thm:IK} we get a primitive $(g+1) \times (g+1)$ matrix 
$T=\begin{pmatrix}T_{1} & r/2\\ r^t/2 & m \end{pmatrix}, T_1 \in \mathcal{J}_g, m \geq 1$ such that 
$a_F(T) \neq 0$ and hence 
$\phi_m\neq 0$. By using Lemma \ref{lemma:3} we get infinitely many odd primes $p$ such that 
$a_F(A_p^tTA_p)$ are the Fourier coefficients of $\phi_p$. We have $a_F(A_p^tTA_p) = a_F(T) \neq 0$ and hence $\phi_p \neq 0$.
\end{proof}

\begin{proof}[Proof of Proposition \ref{prop:2}]
Let us first consider the Fourier-Jacobi expansion of $F$ as in \eqref{eq:prop1}. Then by using Proposition \ref{prop:1} we get infinitely many odd primes $p$ such that $\gcd(p, N)=1$ and $\phi_p \neq 0$. For any such $\phi_p$, by using Proposition \ref{prop:main} we get that all the theta coefficients 
$\phi_{p,\mu} (\mu \in \mathcal{I}_{g,1})$ of $\phi_p$ are non-zero. In particular, $\phi_{p, \mu_1} \neq 0$, where $\mu_1$ is the column vector whose all entries are $1$. We have
$$
\phi_{p, \mu_1}(\tau)=\sum_{{T \in \mathcal{J}_g}\atop T + \mu_1\mu_1^t \in {4p}\mathcal{J}_g} c_{\mu_1}(T) 
e\left(\frac{T\tau}{4p}\right).
$$
Since $\phi_{p,\mu_1} \neq 0$, there exists a $T \in \mathcal{J}_g$ such that $c_{\mu_1}(T) \neq 0$. This means we have a $\mathcal{T} \in \mathcal{J}_g$ with $T = 4p\mathcal{T}-\mu_1 \mu_1^t$ such that 
$$c_{\mu_1}(T) = a_F\left(\begin{pmatrix} \mathcal{T} & \mu_1/2 \\ \mu_1^t/2 & p\end{pmatrix}\right) \neq 0.$$
Write $\mathcal{T}=(t_{ij})$ and define $\mathcal{M}_1$ to be the $g \times g$ matrix 
$$\mathcal{M}_1=\begin{pmatrix} t_{22} & \cdots & t_{2g} & 1/2 \\ \vdots & \vdots & \vdots & \vdots\\
t_{g2} & \cdots & t_{gg} & 1/2 \\ 1/2 & \cdots & 1/2 & p \end{pmatrix}.$$
Clearly $\mathcal{M}_1 \in \mathcal{J}_g$ which is primitive. In the Fourier-Jacobi decomposition \eqref{eq:prop2}, the coefficient $\psi_{\mathcal{M}_1}$ will be non-zero as $a_F\left(\begin{pmatrix} \mathcal{T} & \mu_1/2 \\ \mu_1^t/2 & p\end{pmatrix}\right) \neq 0$. 
\end{proof}

\section{Proofs of Theorem \ref{thm:1} and Theorem \ref{thm:3}}

\subsection{Proof of Theorem \ref{thm:1}}

Let $g, k, N, \chi$ be as in the theorem. Let $F \in S_k^{g+1}(N, \chi)$ with the Fourier series expansion 
$$F(Z)= \sum_{T \in \mathcal{J}_{g}} a_F(T) e(T Z).$$
By using Proposition \ref{prop:2} we get infinitely many primitive matrices $\mathcal{M} \in \mathcal{J}_g$ 
such that 
$\phi_{\mathcal{M}} \not\equiv 0$, where $\phi_{\mathcal{M}} \in J^{\rm cusp}_{k,\mathcal{M}}(N,\chi)$ 
are Jacobi cusp forms 
appeared in the Fourier-Jacobi decomposition \eqref{eq:prop2} of $F$. 
Fix any such $\mathcal{M}$. Let 
$$\phi_{\mathcal{M}} = \sum_{\mu\in \mathcal{I}_g} h_\mu 
\theta_{\mathcal{M},\mu}$$ 
be the theta decomposition of $\phi_{\mathcal{M}}$. 
Then there exists a $\mu \in \mathcal{I}_{g}$ such that $h_\mu \not \equiv 0$.
By using Proposition \ref{prop:hmutrans} we have $h_\mu \in S_{k-\frac{g}{2}}(\Gamma(N'))$. By using 
\cite[Proposition 7.3.3, Theorem 7.3.4]{CS} for integral weight and similar arguments in half-integral weight 
case, we have
$$h_\mu(N' \tau) \in S_{k-g/2}(\Gamma_1({N'}^2)) = \oplus \ S_{k-g/2}(\Gamma_0({N'}^2),\psi),$$ 
where the direct sum is over all Dirichlet characters $\psi$ modulo ${N'}^2$. We have 
$$h_\mu(N' \tau)=\sum_{{n \geq 1}\atop{n \equiv -\mathcal{M}^*[\mu^t] \pmod{4 \det \mathcal{M}}}} c_\mu(n) e(n\tau),
\ \ c_\mu(n)=a_F\left(\begin{pmatrix} \frac{n+\mathcal{M}^*[\mu^t]}{4 \det \mathcal{M}} & \mu/2 \\ \mu^t/2 & \mathcal{M} \end{pmatrix}\right).$$ 
Let $\hat{c}_\mu(n) = \frac{c_\mu(n)}{n^{(\lambda-1)/2}}, \lambda = k-\frac{g}{2}$.
By using the Fourier coefficients bound due to Deligne \cite{PD} and Bykovskii \cite{VB}, we have the following
\cite[Theorem 3.4]{HKLW}. 
\begin{equation}\label{boundcoefficients}
\hat{c}_\mu(n) \ll_\epsilon
\begin{cases}
n^{\epsilon} &  \text{if}\ g \ \text{is even},\\
n^{\frac{3}{16}+\epsilon}  & \text{if} \ g\ \text{is odd}.\\
\end{cases}
\end{equation}
By using the bound \cite[Corollary 3.5]{HKLW} for average sum of the coefficients, we have  
\begin{equation}\label{estimates}
\sum_{n \leq x} \hat{c}_\mu(n) \ll_\epsilon
\begin{cases}
n^{\frac{1}{3}+\epsilon} &  \text{if}\ g \ \text{is even},\\
n^{\frac{19}{48}+\epsilon}  & \text{if} \ g\ \text{is odd}.\\
\end{cases}
\end{equation}
By applying the Rankin-Selberg method to $h_\mu(N'\tau)$ (see \cite[Page 357, Theorem 1]{RAR}, \cite[Eq. 1.14]{AS}), we get the following estimate.
\begin{equation}\label{eq:estimates2}
\sum_{n \leq x} \hat{c}_\mu^2(n) = K_\mu x + O(x^{3/5+\epsilon}),
\end{equation}
for any $\epsilon > 0$. Here $K_\mu$ is a constant. 

Now we use \cite[Theorem 2.1]{HKLW} which gives a criteria for sign changes in a
sequence under certain conditions. By using the estimates given by \eqref{boundcoefficients}, \eqref{estimates}, \eqref{eq:estimates2} with \cite[Theorem 2.1]{HKLW} we get that 
$\hat{c}_\mu(n)$ changes sign at least once for $n \in (x, x+x^{3/5}]$ for sufficiently large $x$. 
This gives us the claimed sign change results for the coefficients $a_F(T)$ in the theorem.
 
\subsection{Proof of Theorem 2}

We follow the proof of Theorem \ref{thm:1} presented in the previous subsection with the assumption that 
$g+1 = 2$, that is, $g=1$. We do not need Proposition \ref{prop:2} to handle this case.

Let $F \in S_k^2(N, \chi)$ be a non-zero Siegel cusp form with real Fourier coefficients as in the statement of 
the theorem. By using Proposition \ref{prop:1}, we get infinitely many odd primes $p$ such that
$\phi_p \in J^{\rm cusp}_{k,p}(N,\chi)$, Jacobi cusp forms in the Fourier-Jacobi decomposition of $F$, 
are not trivially zero. Fix any such prime $p$. By using Proposition \ref{prop:main} for $\phi_p$, we get that 
none of the theta components 
$h_r, r \in \mathcal{I}_{1} (=\mathbb{Z}/2p\mathbb{Z})$ of $\phi_p$ is zero. 
Also, by using Proposition \ref{prop:1} 
we get that each component $h_r$
satisfy the following transformation 
property with respect to the principal congruence subgroup $\Gamma(4Np)$.
\begin{equation*}\label{eq:phirtrans}
h_r(\gamma \tau) = h_r \left(\frac{a\tau+b}{c\tau+d}\right) = 
\left(\frac{c}{d}\right)(c\tau+d)^{k-1/2} h_r(\tau), \ 
\gamma = \begin{pmatrix} a&b\\c&d \end{pmatrix} \in \Gamma (4Np).
\end{equation*}
Moreover, by using Proposition \ref{prop:hmutrans} each $h_r$ is a cusp form of weight $k-\frac{1}{2}$ for 
the group $\Gamma(4Np)$.
Now onwards we follow the proof of Theorem \ref{thm:1} to conclude the claimed result. 

\bigskip

\noindent {\bf Acknowledgement:} 
We would like to thank Prof. Todd Cochrane for some useful conversations about quadratic forms.
The research of the first author was partially supported by the DST-SERB grant ECR/2016/001359.

\bigskip


\begin{thebibliography}{10}
\bibitem{AA}  A. Ajouz, 
{\em Hecke operators on Jacobi forms of lattice index and the relation to elliptic modular forms}, 
Ph. D. Thesis, University of Siegen (2015).
\bibitem{ALS} J. Asaari, S. Lester and A. Saha, {\em On Fundamental Fourier coefficients of Siegel cusp forms of degree $2$}, Preprint: http://www.maths.qmul.ac.uk/~asaha/research/fundamental.pdf
\bibitem{BD} S. B\"ocherer and S. Das, {\em On fundamental Fourier coefficients of Siegel modular forms}, 
J. Inst. Math. Jussieu, to appear. arXiv:https://arxiv.org/abs/1810.00762 has an Appendix in addition.
\bibitem{VB} V. Bykovskii, {\em A trace formula for the scalar product of Hecke series and its applications}, J. Math. Sci. {\bf 89} (1998), 915--932.
\bibitem{CGK} Y. Choie, S. Gun and W. Kohnen, {\em An explicit bound for the first sign change of the Fourier coefficients of a Siegel cusp form}, Int. Math. Res. Not. {\bf 2015} (2014), 3782--3792.     
\bibitem{CS} H. Cohen and F. Stromberg, {Modular Forms: a classical approach}, GSM (AMS) 2017. 
\bibitem {PD} P. Deligne, {\em La conjecture de Weil. I}, Inst. Hautes \'Etudes Sci. Publ. Math. {\bf 43} 
(1974) 273--307.
\bibitem{WD} W. Duke, {\em On ternary quadratic forms}, J. Number Theory {\bf 110} (2005), 37--43.
\bibitem{EZ} M. Eichler and D. Zagier, {The theory of Jacobi forms}, Progr. Math. {\bf 55}, Birkh\"auser,
Boston, 1985.
\bibitem{GS17} S. Gun and J. Sengupta, {\em Sign changes of Fourier coefficients of Siegel cusp forms of degree two on Hecke congruence subgroups}, Int. J. Number Theory {\bf 13} (2017), 2597--2625.
\bibitem{HZ} X. He and L. Zhao, {\em On the first sign change of Fourier coefficients of cusp forms}, J. Number
Theory {\bf 190} (2018), 212--228.
\bibitem{HKLW} T. A. Hulse, C. I. Kuan, D. Lowry-Duda and A. Walker,
{\em Sign changes of coefficients and sums of coefficients of L-functions} {\bf 177} (2017), 
J. Number Theory, 112--135. 
\bibitem{IK} T. Ibukiyama and H. Katsurada, {\em An Atkin-Lehner type theorem on Siegel modular forms and primitive Fourier coefficients}, in: Geometry and analysis of automorphic forms of several variables, in: Ser. Number Theory Appl, vol. 7, World Sci. Publ., Hackensack, NJ, 2012, pp. 196--210. 
\bibitem{I} H. Iwaniec, {Topics in classical automorphic forms}, GSM, Amer. Math. Soc. {\bf 17} (1997).
\bibitem{SJ} S. Jesgarz, {\em Vorzeichenwechsel von Fourierkoeffizienten von Siegelschen Spitzenformen}, 
Diploma Thesis (unpublished), University of Heidelberg, 2008.
\bibitem{AK} A. Krieg, {\em Koecher-Maass series for Jacobi forms}, Comment. Math. Univ. St. Paul. {\bf 49} (2000), 61--70.
\bibitem{MM} M. Manickam, {\em On the first Fourier-Jacobi coefficient of Siegel modular forms of degree two}, 
J. Number Theory {\bf 219} (2021), 404--411.
\bibitem{M} Y. Martin, {\em On degree 2 Siegel cusp forms and its Fourier coefficients}, J. Number Theory 
{\bf 208} (2020), 346--366.
\bibitem{RAR} R. A. Rankin, {\em Contributions to the theory of Ramanujan's function $\tau(n)$ and similar arithmetic functions, II. The order of the Fourier coefficients of integral modular forms}, Proc. Cambridge Philos.
Soc. {\bf 35} (1939), 357--372.
\bibitem{saha} A. Saha, {\em Siegel cusp forms of degree $2$ are determined by their fundamental Fourier coefficients}, Math. Ann. {\bf 355} (2013), 363--380.
\bibitem{SS} A. Saha and R. Schmidt, {\em Yoshida lifts and simultaneous non-vanishing of dihedral twists of modular $L$-functions}, J. Lond. Math. Soc. (2) {\bf 88} (2013), 251--270.
 \bibitem{AS} A. Selberg, {\em On the estimation of Fourier coefficients of modular forms}, Proc. Sympos. Pure
Math., Vol. VIII pp. 1--15 Amer. Math. Soc., Providence, R.I. (1965).
\bibitem{GS} G. Shimura, {\em On certain reciprocity laws for theta functions and modular forms}, Acta
Math. {\bf 141} (1978), 35--71.
\bibitem{TS} T. Shintani, {\em On construction of holomorphic cusp forms of half-integral weight}, 
Nagoya Math. J. {\bf 58} (1975), 83--126. 
\bibitem {HW} H. Weber, {\em Beweis des Satzes, dass jede eigentlich primitive quadratische Form unendlich viele Primzahlen darzustellen f\"ahig ist.},  Math. Ann. {\bf 20} (1882), 301--329. 
\bibitem{SY} S. Yamana, {\em Determination of holomorphic modular forms by primitive Fourier coefficients},
Math. Ann. {\bf 344} (2009), 853--862.
\bibitem{DZ} D. Zagier, {Sur la conjecture de Saito-Kurokawa (d'apr\'es H. Maass)}, In: Seminar on Number Theory, Paris 1979-80, In: Progr. Math., vol. 12, pp. 371--394, Birkh\"auser, Boston (1981).
\bibitem{CZ} C. Ziegler, {\em Jacobi forms of higher degree}, Abh. Math. Sem. Univ. Hamburg {\bf 59} (1989), 191--224.
\end{thebibliography}
\end{document}